\begin{document}

\newtheorem{theorem}{Theorem}[section]
\newtheorem{corollary}[theorem]{Corollary}
\newtheorem{definition}[theorem]{Definition}
\newtheorem{conjecture}[theorem]{Conjecture}
\newtheorem{question}[theorem]{Question}
\newtheorem{lemma}[theorem]{Lemma}
\newtheorem{proposition}[theorem]{Proposition}
\newtheorem{example}[theorem]{Example}
\newtheorem{problem}[theorem]{Problem}
\newenvironment{proof}{\noindent {\bf
Proof.}}{\rule{3mm}{3mm}\par\medskip}
\newcommand{\remark}{\medskip\par\noindent {\bf Remark.~~}}
\newcommand{\pp}{{\it p.}}
\newcommand{\de}{\em}

\newcommand{\JEC}{{\it Europ. J. Combinatorics},  }
\newcommand{\JCTB}{{\it J. Combin. Theory Ser. B.}, }
\newcommand{\JCT}{{\it J. Combin. Theory}, }
\newcommand{\JGT}{{\it J. Graph Theory}, }
\newcommand{\ComHung}{{\it Combinatorica}, }
\newcommand{\DM}{{\it Discrete Math.}, }
\newcommand{\ARS}{{\it Ars Combin.}, }
\newcommand{\SIAMDM}{{\it SIAM J. Discrete Math.}, }
\newcommand{\SIAMADM}{{\it SIAM J. Algebraic Discrete Methods}, }
\newcommand{\SIAMC}{{\it SIAM J. Comput.}, }
\newcommand{\ConAMS}{{\it Contemp. Math. AMS}, }
\newcommand{\TransAMS}{{\it Trans. Amer. Math. Soc.}, }
\newcommand{\AnDM}{{\it Ann. Discrete Math.}, }
\newcommand{\NBS}{{\it J. Res. Nat. Bur. Standards} {\rm B}, }
\newcommand{\ConNum}{{\it Congr. Numer.}, }
\newcommand{\CJM}{{\it Canad. J. Math.}, }
\newcommand{\JLMS}{{\it J. London Math. Soc.}, }
\newcommand{\PLMS}{{\it Proc. London Math. Soc.}, }
\newcommand{\PAMS}{{\it Proc. Amer. Math. Soc.}, }
\newcommand{\JCMCC}{{\it J. Combin. Math. Combin. Comput.}, }
\newcommand{\GC}{{\it Graphs Combin.}, }

\title{Equitable Partition Theorem of Tensors and  Spectrum of Generalized Power Hypergraphs\thanks{
 This work is supported by   the National Natural Science Foundation of China (No.11601337, 11701372, 11531001),  the Joint NSFC-ISF Research Program (jointly funded by the National Natural Science Foundation of China and the Israel Science Foundation (No. 11561141001)),  the Natural Science Foundation of Shanghai (No.16ZR1422400) and the training program of Shanghai Normal University (No.  SK201602).}}
\author{   Ya-Lei Jin$^1$, Jie Zhang$^2$, Xiao-Dong Zhang$^3$\thanks{ Corresponding author: Xiao-Dong Zhang (Email: xiaodong@sjtu.edu.cn)}\\
{\small $^1$Department of Mathematics,}
{\small Shanghai Normal University }\\
{\small 100 Guilin road, Shanghai 200234, P.~R. China. Email:zzuedujinyalei@163.com}\\
{\small $^2$School of Insurance,}
{\small Shanghai Lixin University of Accounting and Finance}\\
{\small 995 Shangchuan road, Shanghai 201209, P.~R. China. Email: zhangjie.sjtu@163.com}\\
{\small $^3$ School of Mathematical Sciences, MOE-LSC, SHL-MAC,}
{\small Shanghai Jiao Tong University} \\
{\small  800 Dongchuan road, Shanghai, 200240,  P.R. China. Email: xiaodong@sjtu.edu.cn}
}

\date{}
\maketitle
 \begin{abstract}
In this paper, we present an equitable partition theorem of tensors, which gives the relations between $H$-eigenvalues of a tensor and its quotient equitable tensor and extends the equitable  partitions of graphs to hypergraphs.  Furthermore,  with the aid of it, some properties  and $H$-eigenvalues of the generalized power hypergraphs are obtained, which extends some known results, including  some results  of Yuan, Qi and Shao \cite{Yuan2016}.

{{\bf Key words:} The equitable partition; tensor; H-eigenvalue;  generalized power hypergraph; signless Laplacian tensor.
 }

      {{\bf AMS Classifications:} 05C65; 15A69}.
       \end{abstract}
\vskip 0.5cm

\section{Introduction}

An order $k$ and  dimensions $(n_1, \cdots, n_k)$ real {\it tensor} $\mathscr{T}=(t_{i_1,i_2, \cdots, i_k})\in \mathbb{R}^{n_1\times n_2\times \cdots\times n_k}$ is a multidimensional array of order $k$ with $n_1n_2\cdots n_k$ entries, where $i_j\in [n_j]$ for $j=1, \cdots, k$ and $[n]:=\{1, \cdots, n\}$. If $n_1=\cdots=n_k=n$, an order $k$ and dimensions  $(n_1, \cdots, n_k)$ real {\it tensor} is called an order $k$ and  dimension $n$ tensor.  If   $\mathscr{A}=(a_{i_1, \cdots, i_m})\in \mathbb{R}^{n_1\times n_2\times \cdots\times n_2}$ and $\mathscr{B}=(b_{i_2, \cdots, i_{k+1}})\in
\mathbb{R}^{n_2\times n_3\times\cdots\times n_{k+1}}$ are order $m\ge 2$ and $k\ge 1$ tensors, respectively, the product (for example, see \cite{bu20141, Shao2013a}) $ \mathscr{A}\mathscr{B}$ of two tensors $\mathscr{A}$ and $\mathscr{B}$  is tensor $\mathscr{C}$ of order $(m-1)(k-1)+1$
 and dimension $(n_1, n_3, \cdots, n_3, n_4, \cdots, n_4, \cdots, n_{k+1}, \cdots, n_{k+1})$ with  entries
\begin{equation}\label{Eq2}
c_{i\alpha_2\cdots \alpha_{m}}= \sum_{i_2,\cdots,i_m\in [n_2]} a_{ii_2\cdots i_m}b_{i_2\alpha_2}\cdots b_{i_m\alpha_m}, (i\in [n_1],\alpha_2,\cdots, \alpha_m\in [n_3]\times[n_4] \times \cdots \times[n_{k+1}]).
\end{equation}
 Let $\mathscr{T}$ be an order $k$ dimension $n$ real tensor. If there exists a real number $\lambda\in \mathbb{R}$ and a nonzero real vector
 $x=(x_1, \cdots, x_n)^T\in \mathbb{R}^{n}$ such that
 $$\mathscr{T}x=\lambda x^{[k-1]}, $$
 then $\lambda$ is called an $H$-{\it eigenvalue} of $\mathscr{T}$ (see \cite{Lim2005} and \cite{Qi2005}) and
  $x$ is called an {\it eigenvector} of $\mathscr{T}$ corresponding to $H$-eigenvalue $\lambda$, where  $x^{[k-1]}=(x_1^{k-1},\cdots,x_n^{k-1})^T$.
  Moreover, the largest $H$-eigenvalue of $\mathscr{T}$
 is denoted by $\lambda(\mathscr{T})$.

 Let ${\mathscr{H}}=(V(\mathscr{H}), E(\mathscr{H}))$ be a simple (i.e., no loops or multiedges) hypergraph, where the vertex set $V(\mathscr{H})=[n]$ and the edge set $E(\mathscr{H})=\{e_1, \cdots, e_m\}$ with $e_i\subseteq V(\mathscr{H})$ for $i=1, \cdots, m$.  Further, if $|e_i|=k$ for $i=1, \cdots, m$, then $\mathscr{H}$ is called a {\it $k$-uniform hypergraph}.  The {\it degree} of a vertex $v\in V(\mathscr{H})$ in  hypergraph $\mathscr{H}$, written $d_v$,  is the number of edges incident to $v$.
 For any hypergraph $\mathscr{H}$,  there are a few tensors associated with $\mathscr{H}$.
               The {\it adjacency tensor} of a $k$-uniform hypergraph $ \mathscr{H}$ on $n$ vertices is defined as the tensor
                $\mathscr{A}(\mathscr{H})=(a_{i_1\cdots i_k})$ of order $k$ and dimension $n$,
                where
                $$a_{i_1\cdots i_k}=\left\{ \begin{array}{ll}
                \frac{1}{(k-1)!} &\ \ \mbox{if} \ e=\{i_1, \cdots, i_k\}\in E(\mathscr{H}),\\
                0 &\ \ {\mbox{otherwise}.} \end{array}\right.$$
  Moreover, let $ \mathscr{D}(\mathscr{H})$ be an order $k$  and dimension $n$ diagonal tensor whose diagonal entries $d_1, \cdots, d_n$. Then $\mathscr{L}(\mathscr{H})=\mathscr{D}(\mathscr{H})-\mathscr{A}(\mathscr{H})$ and $\mathscr{Q}(\mathscr{H})=\mathscr{D}(\mathscr{H})+\mathscr{A}(\mathscr{H})$ are called the {\it Laplacian} and {\it signless Laplacian tensor} of  $H$, respectively.

   During the past over ten years,  the study of tensors has received  increasing and increasing attention. Hu etc. \cite{Hu2013a}  and Shao  etc. \cite{Shao2013} studied the determinant of tensor $\mathscr{A}$ by using the resultant in \cite{Hu2013a} and their properties.  Chang,  Pearson and Zhang in  \cite{Chang2008} extended  Perron-Frobenius theorem of nonnegative matrices to nonnegative irreducible tensor, Yang and Yang \cite{Yang2010,Yang2011}  further obtained many important results on nonnegative tensors, Friedland, Gaubert and Han \cite{Friedland2013} proved an analog of Perron-Frobenius theorem for polynomial maps with nonnegative coefficients. With the rapid development of spectrum of tensor, spectral hypergraph theory has been more and more interesting, since it is able to disclose some relations between structure properties of hypergraphs and eigenvalues (eigenvector) of tensors associated with it.  Keevash,  Lenz and Mubayi in \cite{kee2014} studied the spectral extremal problem of hypergraphs, which has made much contribution for the extremal hypergraph theory.  Shao, Shan and Wu in \cite{Shao2015} characterized that for a connected $k$-uniform hypergraph $\mathscr{H}$,  $H$-spectra of $\mathscr{L}(\mathscr{H})$ and $\mathscr{Q}(\mathscr{H})$
   are the same if and only if $\mathscr{H}$ is odd-bipartite and $k$ is even.  For more results on the spectral radius of hypergraphs, the readers are referred to \cite{chen, Hu2013, Hu2015,Bu2014,Yue2016}.

   In this paper, we motivated by the study of eigenvalues of a tensor and the relationships between the eigenvalues of hypergraphs and the structure properties. Since it is well known that equitable partitions and divisors represent a powerful tool in spectral graph theory, we extend the equitable partitions of graphs to hypergraphs and exploit regularity properties of a hypergraph to obtain part of the spectrum and give some applications in the spectral hypergraph theory. The rest of this paper is organized as follows. In section 2, some notations and known results are presented. In section 3, we give the equitable partition theorem for tensors. In section 4, we obtain some relations between some properties of the generalized power hypergraphs and eigenvalues of  its signless Laplacian tensor, which extend some known results.

\section{Preliminary}
Before stating the Perron-Frobenius theorem, we introduce the following notation.
 An order $k$ and dimension $n$  tensor $\mathscr{A}=(a_{i_1, \cdots, i_k})$  is associated with an  undirected $k$-partite graph
 $G(\mathscr{A}) =(V, E),$ the vertex set of which is the disjoint union $V =\bigcup_{j=1}^kV_j$ with $V_j=[n]$, $j\in [r]$. The
edge $(i_p, i_q)\in V_p\times V_q,$  $ p
\neq q$ belongs to $E$ if and only if $a_{i_1,i_2,\ldots,i_k} \neq 0$ for some $k-2$ indices
$\{i_1,\ldots, i_k\}\setminus\{i_p, i_q\}$. The tensor $\mathscr{A}$ is called {\it weakly irreducible} if the graph $G(\mathscr{A})$ is  connected.
The most important Perron-Frobenius theorem for nonnegative tensors can be stated as follows.

\begin{theorem}\label{FRL}(\cite{Chang2008,Friedland2013, Yang2010, Yang2011})\label{FRL}
Let $\mathscr{A}$ be an order $k$ and dimension $n$ nonnegative  tensor.  \\
(1) Let $x$ be a positive vector. If  $ax^{[k-1]}\le \mathscr{A}x\le bx^{[k-1]}$, then $a\le \lambda(\mathscr{A})\le b$; if $ax^{[k-1]}<\mathscr{A}x< bx^{[k-1]}$, then $a< \lambda(\mathscr{A})< b$, where $a,b$ are real numbers.\\
(2) If $\mathscr{A}$ is weakly irreducible tensor, then there exists a unique eigenvalue $\lambda(\mathscr{A})$ associated with a positive eigenvector. Moreover, the nonnegative eigenvector is unique up to a multiplicative constant.
\end{theorem}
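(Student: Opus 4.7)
The plan is to prove (1) by a Collatz--Wielandt-type comparison against a known nonnegative H-eigenvector, and (2) by a topological fixed-point argument coupled with a propagation argument based on weak irreducibility. The standing tool in both parts is the elementary fact that, for nonnegative $\mathscr{A}$ and nonnegative vectors $u \le v$, one has $\mathscr{A}u \le \mathscr{A}v$ entry-wise, together with the homogeneity identity $\mathscr{A}(tx) = t^{k-1}\mathscr{A}x$ for $t \ge 0$.

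For part (1), the starting point is the standard Perron--Frobenius existence statement for general nonnegative tensors, which supplies a nonnegative eigenvector $y \neq 0$ with $\mathscr{A}y = \lambda(\mathscr{A})\, y^{[k-1]}$. To obtain $\lambda(\mathscr{A}) \le b$, I would set $t = \max_i y_i/x_i > 0$, pick an index $i_0$ realizing the maximum, and observe that $y \le tx$ gives $\mathscr{A}y \le t^{k-1}\mathscr{A}x \le t^{k-1}b\, x^{[k-1]}$; evaluating at $i_0$ where $y_{i_0} = tx_{i_0}$ yields $\lambda(\mathscr{A}) \le b$. The lower bound $\lambda(\mathscr{A}) \ge a$ follows from the dual Collatz--Wielandt estimate $\lambda(\mathscr{A}) \ge \min_i (\mathscr{A}x)_i / x_i^{k-1} \ge a$. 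For the strict version, the finiteness of the index set allows the choice of some $\epsilon>0$ with $(a+\epsilon) x^{[k-1]} \le \mathscr{A}x \le (b-\epsilon) x^{[k-1]}$, reducing to the weak case.

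For part (2), existence of a nonnegative eigenvector for $\lambda(\mathscr{A})$ comes from a Brouwer fixed-point argument applied to the map $F_\delta(x) = (\mathscr{A}x + \delta \mathbf{1})^{[1/(k-1)]}$ renormalized on the probability simplex of $\mathbb{R}^n$; Brouwer produces a fixed point $x_\delta$, and compactness as $\delta \to 0$ yields a nonnegative eigenvector $y$. Positivity of $y$ is then forced by weak irreducibility: if $y_i = 0$, then $(\mathscr{A}y)_i = 0$ forces every tuple $(i_2,\ldots,i_k)$ with $a_{i i_2\cdots i_k} > 0$ to contain at least one $j$ with $y_j = 0$, and this zero set spreads along the edges of the connected graph $G(\mathscr{A})$ to all of $[n]$, a contradiction. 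For uniqueness, given two positive eigenvectors $x, y$ with eigenvalues $\lambda_1, \lambda_2$, I would set $\alpha = \min_i x_i/y_i$ and $\beta = \max_i x_i/y_i$, and use $\mathscr{A}x \ge \mathscr{A}(\alpha y) = \alpha^{k-1}\lambda_2\, y^{[k-1]}$ evaluated at the minimizing coordinate to get $\lambda_1 \ge \lambda_2$, and symmetrically via $\beta$ to get $\lambda_1 \le \lambda_2$; the equality $\alpha = \beta$ is then extracted by a further propagation along $G(\mathscr{A})$.

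The main obstacle I expect is the positivity-and-uniqueness propagation step in part (2), since the definition of $G(\mathscr{A})$ used here is an undirected $k$-partite graph on $\bigcup_{j=1}^k V_j$ rather than the directed graph on $[n]$ used in some of the cited Perron--Frobenius proofs. The translation between ``$y_i = 0$ in the sense of coordinates in $[n]$'' and ``$i$ is a vertex in several of the copies $V_1,\ldots,V_k$ of $G(\mathscr{A})$'' requires careful bookkeeping, especially when inferring which coordinates must vanish from the vanishing of a single coordinate of $\mathscr{A}y$. Once this dictionary is in place, the argument reduces to standard connectivity propagation.
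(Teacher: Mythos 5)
You should know at the outset that the paper does not prove this statement at all: Theorem 2.1 is quoted from the cited literature (Chang--Pearson--Zhang, Friedland--Gaubert--Han, Yang--Yang), so there is no in-paper argument to compare with; I am therefore assessing your sketch on its own merits. In part (1) your upper bound is fine (max-ratio comparison of a nonnegative eigenvector of $\lambda(\mathscr{A})$ against the positive test vector $x$), but the lower bound is circular as written: the ``dual Collatz--Wielandt estimate'' $\lambda(\mathscr{A})\ge\min_i(\mathscr{A}x)_i/x_i^{k-1}$ is exactly the other half of the statement you are proving, and the symmetric min-ratio argument does not go through directly because the nonnegative eigenvector $y$ of a general nonnegative tensor may have zero coordinates, making $\min_i y_i/x_i=0$. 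The standard repair is a perturbation: apply the argument to $\mathscr{A}+\varepsilon\mathscr{J}$ ($\mathscr{J}$ the all-ones tensor), which has a positive Perron vector, and let $\varepsilon\to0$ using monotonicity and continuity of the spectral radius. This step needs to be stated, not assumed.

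The more serious gap is the positivity step in part (2). From $y_i=0$ and $(\mathscr{A}y)_i=0$ you only learn that \emph{every} tuple with $a_{ii_2\cdots i_k}>0$ contains \emph{at least one} vanishing coordinate, not that all coordinates adjacent to $i$ vanish; so the zero set does not ``spread along the edges of $G(\mathscr{A})$,'' and no bookkeeping about the $k$-partite versus directed graph will rescue this. Concretely, take $k=3$, $n=2$, $a_{111}=a_{122}=a_{212}=a_{221}=1$ and all other entries zero: this tensor is weakly irreducible, yet $y=(1,0)^T$ satisfies $\mathscr{A}y=1\cdot y^{[2]}$, a nonnegative eigenvector with a zero coordinate (here $\lambda(\mathscr{A})=2$ with eigenvector $(1,1)^T$). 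Hence positivity cannot follow from the eigenvector equation plus connectivity alone; any correct argument must use that the eigenvalue in question is $\lambda(\mathscr{A})$, or compare against a positive eigenvector whose existence is established separately (as in Friedland--Gaubert--Han via nonlinear Perron--Frobenius theory). The workable route is essentially the equality-set propagation you sketch for uniqueness: once a positive eigenvector $x$ for $\lambda(\mathscr{A})$ exists, for any nonnegative eigenvector $y$ of $\lambda(\mathscr{A})$ put $s=\max_i y_i/x_i$, observe that equality at a maximizing index forces $y_j=sx_j$ for every $j$ occurring in a nonzero entry of that row, and propagate the set $\{i: y_i=sx_i\}$ over the connected graph $G(\mathscr{A})$; this yields positivity and uniqueness up to scale simultaneously, and it must replace, not supplement, your zero-set propagation. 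Finally, note that your Brouwer/$\delta\to0$ construction produces \emph{some} nonnegative eigenpair; identifying its eigenvalue with $\lambda(\mathscr{A})$ requires an extra comparison (e.g.\ via part (1)), which your outline currently skips.
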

Further, there are the following results on nonnegative tensors.
\begin{theorem}\label{DS}\cite{Yang2010,Yang2011,Shao2013a}
If the two tensors $\mathscr{A}$ and $\mathscr{B}$ are diagonal similar, i.e., $\mathscr{B}=D^{-(k-1)}\mathscr{A}D$ for some invertible diagonal matrix $D$. Then $\mathscr{A}$ and $\mathscr{B}$  have the same eigenvalues.
\end{theorem}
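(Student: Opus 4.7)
The plan is to show that the H-eigenvalue equation is preserved under the diagonal change of coordinates $x\mapsto Dy$, so that the eigenvalues of $\mathscr{A}$ and $\mathscr{B}=D^{-(k-1)}\mathscr{A}D$ coincide as sets (with the eigenvectors transforming by $D$). The first step is to unpack the tensor product defined in equation (\ref{Eq2}) for the special case when both factors on the outside are diagonal matrices. Writing $D=\mathrm{diag}(d_1,\ldots,d_n)$ and applying (\ref{Eq2}) twice, I expect to obtain the entrywise formula
\begin{equation*}
b_{i_1 i_2 \cdots i_k} \;=\; d_{i_1}^{-(k-1)}\, d_{i_2}\cdots d_{i_k}\, a_{i_1 i_2 \cdots i_k},
\end{equation*}
which is exactly the scaling that makes the argument work. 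This reduction of the tensor product to a pure rescaling of entries is the only calculational step; it is routine but is the one place where a bookkeeping slip could occur.

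Next, suppose $\lambda$ is an $H$-eigenvalue of $\mathscr{A}$ with eigenvector $x=(x_1,\ldots,x_n)^T$, so that $\sum_{i_2,\ldots,i_k} a_{i_1 i_2\cdots i_k} x_{i_2}\cdots x_{i_k} = \lambda x_{i_1}^{k-1}$ for every $i_1$. I would then set $y = D^{-1}x$, i.e.\ $x_j = d_j y_j$, substitute into the eigenequation, and pull the factors $d_{i_2}\cdots d_{i_k}$ inside the sum. After dividing both sides by $d_{i_1}^{k-1}$, the left-hand side becomes $\sum b_{i_1 i_2\cdots i_k} y_{i_2}\cdots y_{i_k}$ by the entrywise formula above, while the right-hand side becomes $\lambda y_{i_1}^{k-1}$. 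Hence $\mathscr{B}y=\lambda y^{[k-1]}$, and since $D$ is invertible, $y\neq 0$.

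For the converse inclusion, the same computation applied to $\mathscr{B}$ (with $D$ replaced by $D^{-1}$) shows that every $H$-eigenvalue of $\mathscr{B}$ is an $H$-eigenvalue of $\mathscr{A}$, with the eigenvectors transforming via $x=Dy$. Thus $\mathscr{A}$ and $\mathscr{B}$ have exactly the same $H$-spectrum, and this finishes the proof. I do not anticipate a real obstacle: the content of the theorem is that the homogeneous degree-$(k-1)$ eigenequation is covariant under the scaling $x\mapsto Dx$, and the rescaling pattern $d_{i_1}^{-(k-1)} d_{i_2}\cdots d_{i_k}$ that appears in $D^{-(k-1)}\mathscr{A}D$ is precisely the one designed to balance this transformation.
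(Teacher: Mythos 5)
Your argument is correct: the entrywise formula $b_{i_1i_2\cdots i_k}=d_{i_1}^{-(k-1)}d_{i_2}\cdots d_{i_k}a_{i_1i_2\cdots i_k}$ does follow from applying the product rule (\ref{Eq2}) to $D^{-(k-1)}(\mathscr{A}D)$ with $D$ diagonal, and the substitution $x=Dy$ then converts $\mathscr{A}x=\lambda x^{[k-1]}$ into $\mathscr{B}y=\lambda y^{[k-1]}$ and conversely (replacing $D$ by $D^{-1}$), with $y\neq 0$ because $D$ is invertible. Note, however, that the paper states this theorem as a cited result from \cite{Yang2010,Yang2011,Shao2013a} and gives no proof of its own, so there is nothing internal to compare against; the proofs in those references run through the general similarity machinery (Shao's tensor product and the invariance of the characteristic polynomial, i.e.\ of the resultant-based determinant, under similarity), which yields the stronger conclusion that the spectra agree \emph{with algebraic multiplicities}. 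Your direct eigenvector-transformation argument is more elementary and is covariant in exactly the way you describe, but it only establishes equality of the eigenvalue \emph{sets} (equivalently, of the H-eigenvalues and, with complex eigenvectors, of the eigenvalues in Qi's sense); it does not by itself address multiplicities as roots of the characteristic polynomial. For the way Theorem~\ref{DS} is used in this paper, where only largest H-eigenvalues and eigenvalue sets matter, your weaker conclusion is entirely sufficient, and it has the merit of being self-contained; if you wanted the full statement with multiplicities you would need the determinant/characteristic-polynomial route of the cited works.
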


It is easy to obtain the following result from the Perron-Frobenius theorem.

\begin{corollary}
Let $\mathscr{H}$ be a connected hypergraph. Then  there exists a unique eigenvalue $\lambda(\mathscr{A}(\mathscr{H}))$  and $\lambda(\mathscr{Q}(\mathscr{H}))$  associated with a positive eigenvector, respectively.
\end{corollary}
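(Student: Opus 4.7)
The plan is to derive this corollary from Theorem~\ref{FRL}(2), so it suffices to verify that both $\mathscr{A}(\mathscr{H})$ and $\mathscr{Q}(\mathscr{H})$ are nonnegative and weakly irreducible. Nonnegativity is immediate from the definitions: the entries of $\mathscr{A}(\mathscr{H})$ are $1/(k-1)!$ or $0$, and $\mathscr{D}(\mathscr{H})$ is a diagonal tensor whose diagonal entries are the (nonnegative) vertex degrees, so $\mathscr{Q}(\mathscr{H}) = \mathscr{D}(\mathscr{H}) + \mathscr{A}(\mathscr{H})$ is nonnegative as well. The substantive step is therefore showing that the $k$-partite graph $G(\mathscr{A}(\mathscr{H}))$ is connected.

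To argue connectedness, I would fix a hyperedge $e = \{v_1, \ldots, v_k\} \in E(\mathscr{H})$. Because $\mathscr{A}(\mathscr{H})$ is symmetric and every permutation of $(v_1, \ldots, v_k)$ contributes an entry equal to $1/(k-1)! > 0$, the defining condition for an edge of $G(\mathscr{A}(\mathscr{H}))$ is satisfied between $(v_i, p) \in V_p$ and $(v_j, q) \in V_q$ whenever $v_i \neq v_j$ and $p \neq q$. For $k \geq 3$ (the genuine hypergraph regime), I claim all layer-copies of vertices of $e$ lie in a single component: distinct vertices of $e$ in distinct layers are directly adjacent, while the two layer-copies $(v_i, p_1)$ and $(v_i, p_2)$ of the same vertex can be bridged through any $(v_j, p_3)$ with $j \neq i$ and $p_3 \notin \{p_1, p_2\}$. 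If two hyperedges $e, e' \in E(\mathscr{H})$ share a vertex $v$, then the layer-vertices arising from $e$ and $e'$ lie in the same component of $G(\mathscr{A}(\mathscr{H}))$ because both contain all layer-copies of $v$. Iterating along any hyperedge walk in the connected hypergraph $\mathscr{H}$ then shows that $G(\mathscr{A}(\mathscr{H}))$ is connected, so $\mathscr{A}(\mathscr{H})$ is weakly irreducible.

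For the signless Laplacian, observe that $\mathscr{Q}(\mathscr{H})$ and $\mathscr{A}(\mathscr{H})$ agree off the diagonal. Hence every off-diagonal-generated edge of $G(\mathscr{A}(\mathscr{H}))$ is an edge of $G(\mathscr{Q}(\mathscr{H}))$, so $G(\mathscr{Q}(\mathscr{H}))$ contains $G(\mathscr{A}(\mathscr{H}))$ and is likewise connected. This gives weak irreducibility of $\mathscr{Q}(\mathscr{H})$. Applying Theorem~\ref{FRL}(2) separately to $\mathscr{A}(\mathscr{H})$ and $\mathscr{Q}(\mathscr{H})$ then yields the unique largest $H$-eigenvalue of each associated with a positive eigenvector (unique up to scalar), which is what the corollary asserts.

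The main obstacle, while standard, is the chaining argument that bridges two layer-copies of the same vertex through a detour in a third layer; this is the step that consumes the assumption $k \geq 3$. For $k = 2$ (ordinary graphs), the same construction fails precisely on bipartite graphs, in which case the classical matrix Perron--Frobenius theorem supplies the conclusion instead; since the paper's setting is $k$-uniform hypergraphs, this boundary case does not interfere with the argument.
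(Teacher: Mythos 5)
Your proof is correct and follows the same route the paper intends: the paper states this corollary with no written proof beyond invoking Theorem~\ref{FRL}, and your argument simply supplies the implicit verification that $\mathscr{A}(\mathscr{H})$ and $\mathscr{Q}(\mathscr{H})$ are nonnegative and weakly irreducible (connectedness of the associated $k$-partite graph via the layer-bridging argument for $k\ge 3$, with the $k=2$ bipartite boundary case handled by the classical Perron--Frobenius theorem) before applying Theorem~\ref{FRL}(2) to each tensor.
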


Moreover, Hu et al. \cite{Hu2013a} presented the definition of  {\it (triangular) block tensors}.
\begin{definition}\cite{Hu2013a}
Let $\mathscr{A}$ be an order $m$ and dimension $n$ tensor. If there exists some integer $k$ with  $1\le k\le n-1$ such that
$$a_{i_1i_2\cdots i_m}=0,~(\mbox{$\forall i_1\in [k]$ and at least one of $\{i_2,\cdots,i_m\}$  not in $[k]$}).$$
Then $\mathscr{A}$ is called a lower triangular block tensor.  Upper triangular block tensors can  be similarly defined.
Furthermore, the lower triangular block tensor  may be written in the following  form:
\begin{displaymath}
\mathscr{A}=\left(\begin{matrix}
\mathscr{A}_1&0&\cdots&0&0\cr
*& \mathscr{A}_2&\cdots&0&0\cr
\vdots&\vdots&\ddots&\vdots&\vdots\cr
*&*&\cdots&\mathscr{A}_{k-1}&0\cr
*&*&\cdots&*&\mathscr{A}_{k}
\end{matrix}\right).
\end{displaymath}
\end{definition}

In addition, Khan and Fan \cite{Khan2015} introduced the idea of the generalized power of a simple graph $G$.
\begin{definition} \cite{Khan2015}
Let $G=(V,E)$ be a simple graph.  For $k\ge 2$ and $1\le s\le \frac{k}{2}$, the {\it generalized power graph} of $G$, denoted by $\mathscr{G}^{k,s}=(V^{k,s},E^{k,s})$, is the $k$-uniform hypergraph with the following vertex set and edge set
$$V^{k,s}=\left(\cup_{v\in V}V_v\right)\cup \left(\cup_{e\in E} V_e\right),~~~~E^{k,s}=\{V_u\cup V_v \cup V_e|e=uv\in E\},$$
where $V_v=\{i_{v,1},\cdots,i_{v,s}\}$ and $ V_e=\{ i_{e,1},\cdots,i_{e,k-2s}\}$ satisfy  $V_u\bigcap V_v=\varnothing$ for $u\neq v$  and $V_v\bigcap V_e=\varnothing$. Moreover, the adjacency, Laplacian and signless Laplacian tensors of $G^{k,s}$ are, for short, denoted by
$\mathscr{A}^{k, s}$, $\mathscr{L}^{k, s}$ and $\mathscr{Q}^{k, s}$, respectively.
\end{definition}
Moreover,  Hu and Qi \cite{hu2014} introduced the idea of odd-bipartite.
\begin{definition}\cite{hu2014}
A $k$-uniform hypergraph $\mathscr{H}=(V(\mathscr{H}), E(\mathscr{H}))$ is called {\it odd-bipartite} hypergraph, if $k$ is even and there exists a partition $V(\mathscr{H})=V_1\bigcup V_2$ such that $|e\bigcap V_1|$ is odd for each $e\in E(\mathscr{H})$.\end{definition}

 {\bf Remark}. 1. If $s=1$, then $\mathscr{G}^{k, s}$ is exactly the $k$-th power hypergraph (see \cite{Hu2013}) of $G$.   In particular, $\mathscr{G}^{2,1}$ is exactly $G$.

 2. If $s<\frac{k}{2}$, then  $\mathscr{G}^{k, s}$ is a cored hypergraph (see \cite{Hu2013}) which is odd-bipartite. 3. If $s=\frac{k}{2}$,  then $\mathscr{G}^{k, \frac{k}{2}}$ is odd-bipartite if and only if  $G$ is bipartite (see \cite{Khan2015}).
   the spectrum of generalized power hypergraphs have been intensively studied (for example, see \cite{Hu2013, Khan2015, Shao2015,Bu2014, Yuan2016}). Recently, Yuan, Qi and Shao \cite{Yuan2016} proved the following results,  which confirms a conjecture of Hu, Qi and Shao \cite{Hu2013}.
\begin{theorem}\cite{Yuan2016}\label{ShaoC}
Let $G=(V,E)$ be a simple graph, $k=2r(>2)$ be even and $G^k=(V^k,E^k)$ be the $k$-power hypergraph of $G$. Let $\mathscr{L}^k$ and $\mathscr{Q}^k$ be the Laplacian and signless Laplacian tensors of $G^k$ respectively. Then $\{\lambda(\mathscr{L}^k)=\lambda(\mathscr{Q}^k)\}$ is a strictly decreasing sequence with respect to $k$ for the maximum degree of $G$ at least 2.
\end{theorem}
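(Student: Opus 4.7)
The plan is to use the equitable partition theorem of Section~3 to collapse the tensor eigenvalue equation $\mathscr{Q}^k x=\lambda x^{[k-1]}$ to a scalar equation involving only the original graph $G$, and then to exhibit a clean monotonicity of that equation in $r$.

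First, since $k=2r>2$ means $s=1<k/2$, Remark~2 following Definition~2.5 tells us that $G^k$ is cored, hence odd-bipartite. The Shao--Shan--Wu theorem mentioned in the Introduction then gives $\lambda(\mathscr{L}^k)=\lambda(\mathscr{Q}^k)$, so it suffices to track $\lambda(\mathscr{Q}^k)$ as a function of $r\ge 2$. Passing to a connected component of $G$ that attains the spectral radius and still has maximum degree at least $2$, we may assume $G$ is connected.

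Next I would apply the equitable partition theorem to the partition of $V(G^k)$ consisting of the singletons $\{i_{v,1}\}$ for $v\in V$ and the blocks $V_e$ for $e\in E$; this partition is equitable because the $k-2$ vertices of each $V_e$ enter $G^k$ symmetrically. By Theorem~2.1(2) the Perron eigenvector $x$ of $\mathscr{Q}^k$ is then constant on each block, so writing $x_v:=x(i_{v,1})$ and $y_e:=x(w)$ for any $w\in V_e$, the tensor equation splits into the edge-vertex equation $(\lambda-1)y_e^2=x_u x_v$ (for $e=uv$) and the original-vertex equation $(\lambda-d_v)x_v^{k-1}=\sum_{u\sim v}x_u y_{uv}^{k-2}$. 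Substituting $y_e^{k-2}=(x_u x_v)^{r-1}/(\lambda-1)^{r-1}$ and dividing by $x_v^{r-1}>0$, the vector $z$ with $z_v:=x_v^r$ satisfies
\[
(A(G)z)_v=(\lambda-d_v)(\lambda-1)^{r-1}z_v\qquad(v\in V),
\]
equivalently $T(\lambda)z=(\lambda-1)^{r-1}z$ where $T(\lambda):=(\lambda I-D(G))^{-1}A(G)$.

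Finally I would extract the monotonicity. Since $z>0$ and $(A(G)z)_v>0$, the displayed equation forces $\lambda>d_v$ for every $v$, so $\lambda>\Delta(G)\ge 2$; this is where the hypothesis $\Delta(G)\ge 2$ is used. Then $T(\lambda)$ is well-defined, nonnegative, and irreducible; $z$ is its Perron vector, so $(\lambda-1)^{r-1}=\rho(T(\lambda))$. But the entries of $T(\lambda)$ are strictly decreasing in $\lambda$ on $(\Delta(G),\infty)$, so $\rho(T(\lambda))$ is a strictly decreasing continuous function of $\lambda$ there, with range $(0,\infty)$; meanwhile $(\lambda-1)^{r-1}$ is strictly increasing in $\lambda$ on $(1,\infty)$ and, crucially, for each fixed $\lambda>2$ strictly increasing in $r$. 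Therefore the unique root $\lambda_r$ of $\rho(T(\lambda))=(\lambda-1)^{r-1}$ is strictly decreasing in $r$, giving the theorem. The main obstacle I anticipate is making the equitable-partition reduction precise, namely verifying that the partition above is genuinely equitable for $\mathscr{Q}^k$ (so that constancy of $x$ on each block is actually forced by Theorem~2.1(2)) and tracking the algebraic substitutions so that strict positivity of $z$ is preserved for the final Perron--Frobenius step.
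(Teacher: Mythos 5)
Your proposal is correct, and its first half coincides with the paper's own route: reduce to a connected component, use odd-bipartiteness of the cored hypergraph $G^k$ together with the Shao--Shan--Wu theorem to get $\lambda(\mathscr{L}^k)=\lambda(\mathscr{Q}^k)$ (Lemma~\ref{MainL1}), and use the equitable partition $\{V_v,V_e\}$ (Lemma~\ref{MainL2}, Corollary~\ref{Corollary1}) so that the relevant eigenvector is block-constant and the eigenvalue equation collapses to $(\lambda-1)y_e^2=x_ux_v$ and $(\lambda-d_v)x_v^{k-1}=\sum_{u\sim v}x_u y_{uv}^{k-2}$, which are exactly the paper's equalities (\ref{C1E1})--(\ref{C1E3}) with $s=1$. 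Where you genuinely diverge is the monotonicity step. The paper (Lemma~\ref{MainC1}) stays at the tensor level: it feeds the Perron vector of $\mathscr{B}^{k_2,s}$ into $\mathscr{B}^{k_1,s}$, uses $0<x_u/([\lambda-1]^{1/s}x_{v_1})\le 1$ to compare the powers $t^{k_1/2}$ and $t^{k_2/2}$ termwise (with strictness at a vertex of degree $\Delta\ge 2$), and concludes with the Perron--Frobenius bound of Theorem~\ref{FRL}(1). You instead eliminate the edge variables entirely and characterize $\lambda(\mathscr{Q}^k)$ as the unique root in $(\Delta,\infty)$ of $\rho\bigl((\lambda I-D)^{-1}A(G)\bigr)=(\lambda-1)^{r-1}$, then play the strictly decreasing left-hand side against the right-hand side, which is strictly increasing in $\lambda$ and, because $\lambda>2$, strictly increasing in $r$. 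Your version buys a cleaner, purely matrix-theoretic argument and yields the root characterization for free (it makes the regular-graph formula of Lemma~\ref{limlemma} and the limit $\lambda\to\Delta$ transparent); the paper's version avoids the matrix pencil and works uniformly for all $1\le s<\frac{k}{2}$, which it needs for Theorems~\ref{MainC2} and~\ref{MainC3}. When writing yours up, do record the small steps you elide: deduce $\lambda>1$ from $(\lambda-1)y_e^2=x_ux_v>0$ before dividing; justify block-constancy of the Perron vector via weak irreducibility of $\mathscr{Q}^k$ and of the quotient (Lemma~\ref{weak}) together with Theorem~\ref{FRL}(2); and in the reduction to a connected graph note that components of maximum degree at most $1$ contribute only the constant value $2$, so the spectral radius is attained on a component with maximum degree at least $2$.
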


\section{Equitable partition theorem}
In order to prove the equitable partition theorem, we first prove the following Lemma.
\begin{lemma}\label{MainT-1}
Let $\mathscr{A}$ and $\mathscr{B}$ be two  order $k$ dimension $n$ and  $m$ tensors, respectively. Then there  exists an $n\times m$ matrix $X$ with rank $r$ such that $\mathscr{A}X=X\mathscr{B}$ if and only if there exist two nonsingular matrices $P_{n\times n} $ and $ Q_{m\times m}$ such that
$P^{-1}\mathscr{A}P$ and $Q^{-1}\mathscr{B}Q$  are upper  and lower triangular block  tensors, respectively, which  can be written in the following form:
\begin{displaymath}
P^{-1}\mathscr{A}P=\left(\begin{matrix}
\mathscr{C}_1&*\cr
0& \mathscr{C}_2
\end{matrix}\right){\mbox{and }}
Q^{-1}\mathscr{B}Q=\left(\begin{matrix}
\mathscr{C}_1&0\cr
*& \mathscr{D}_2
\end{matrix}\right),
\end{displaymath}
where $\mathscr{C}_1$ is an order $k$ dimension $r$ tensor.
\end{lemma}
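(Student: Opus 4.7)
My plan is to reduce the general problem to the canonical rank-$r$ matrix $\tilde X$ (the $n\times m$ matrix with $I_r$ in the top-left corner and zeros elsewhere), and read off the required block structure by a direct coordinate comparison.

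The preparatory step is to establish two restricted associativity/commutation identities for the tensor product of (\ref{Eq2}): for an order-$k$ tensor $\mathscr{T}$ and conformable matrices $M$, $N$, one has $(\mathscr{T}M)N=\mathscr{T}(MN)$; and for matrices $L$, $N$ (with $L$ applied in mode $1$ and $N$ in modes $2,\ldots,k$), one has $L(\mathscr{T}N)=(L\mathscr{T})N$. Both follow from the defining entrywise formula by a mechanical re-indexing of sums. They legitimize the bracket-free notation $P^{-1}\mathscr{A}P$ and let me move $P$, $P^{-1}$ (on mode $1$) and $Q$, $Q^{-1}$ (on modes $2,\ldots,k$) freely across an equation of the form $\mathscr{A}X=X\mathscr{B}$.

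For the necessity direction, ordinary rank factorization of $X$ supplies nonsingular $P_{n\times n}$ and $Q_{m\times m}$ with $X=P\tilde X Q^{-1}$. Substituting this into $\mathscr{A}X=X\mathscr{B}$ and using the two identities above to peel off $P$, $P^{-1}$ and $Q$, $Q^{-1}$, the equation collapses to $\tilde{\mathscr{A}}\tilde X=\tilde X\tilde{\mathscr{B}}$, where $\tilde{\mathscr{A}}:=P^{-1}\mathscr{A}P$ and $\tilde{\mathscr{B}}:=Q^{-1}\mathscr{B}Q$. Writing both sides coordinatewise, $(\tilde{\mathscr{A}}\tilde X)_{i,j_2,\ldots,j_k}$ vanishes unless every $j_\ell\le r$, in which case it equals $\tilde{\mathscr{A}}_{i,j_2,\ldots,j_k}$, while $(\tilde X\tilde{\mathscr{B}})_{i,j_2,\ldots,j_k}$ vanishes when $i>r$ and equals $\tilde{\mathscr{B}}_{i,j_2,\ldots,j_k}$ when $i\le r$. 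Splitting on whether $i$ and each $j_\ell$ lies in $[r]$ or its complement yields exactly: (i) $i\le r$ and all $j_\ell\le r$ forces $\tilde{\mathscr{A}}_{i,j_2,\ldots,j_k}=\tilde{\mathscr{B}}_{i,j_2,\ldots,j_k}$, defining the common top-left block $\mathscr{C}_1$; (ii) $i\le r$ with some $j_\ell>r$ forces $\tilde{\mathscr{B}}_{i,j_2,\ldots,j_k}=0$, the lower triangular block condition; (iii) $i>r$ with all $j_\ell\le r$ forces $\tilde{\mathscr{A}}_{i,j_2,\ldots,j_k}=0$, the $(2,1)$ zero block of the upper triangular form.

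For the sufficiency direction I reverse the construction: take the same canonical $\tilde X$ and verify $\tilde{\mathscr{A}}\tilde X=\tilde X\tilde{\mathscr{B}}$ directly from the assumed block structures (both sides vanish unless $i\le r$ and all $j_\ell\le r$, and on that region both reduce to the common $\mathscr{C}_1$ entry). Setting $X:=P\tilde X Q^{-1}$ gives a rank-$r$ matrix automatically, and the two identities from the first step transport $\tilde{\mathscr{A}}\tilde X=\tilde X\tilde{\mathscr{B}}$ back to $\mathscr{A}X=X\mathscr{B}$. The main obstacle is purely notational: since the tensor product in (\ref{Eq2}) is not associative in general, every algebraic manipulation must be explicitly licensed by one of the two restricted identities; once that bookkeeping is in place, the rest of the proof reduces to a clean, if indexing-heavy, case analysis.
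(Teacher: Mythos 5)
Your proposal is correct and follows essentially the same route as the paper: factor $X$ through the canonical rank-$r$ matrix $S=P^{-1}XQ$, reduce the hypothesis to $(P^{-1}\mathscr{A}P)S=S(Q^{-1}\mathscr{B}Q)$, read off the zero blocks and the common top-left block $\mathscr{C}_1$ by entrywise comparison, and reverse the construction with $X=PSQ^{-1}$ for sufficiency. The only difference is cosmetic: you state and prove the restricted associativity identities explicitly, whereas the paper uses them implicitly via the known properties of the general tensor product of \cite{Shao2013a}.
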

\begin{proof}
Suppose $\mathscr{A}X=X\mathscr{B}$. If $X$ is a matrix with rank $r$, then there exist nonsingular matrices $P_{n\times n}$ and $ Q_{m\times m}$ such that
\begin{displaymath}
P^{-1}XQ=\left(\begin{matrix}
I_r&0\cr
0& 0
\end{matrix}\right),
\end{displaymath}
where $I_r$ is the $r\times r$ unit matrix. Then
$$\mathscr{A}P\left(\begin{matrix}
I_r&0\cr
0& 0
\end{matrix}\right)Q^{-1}=P\left(\begin{matrix}
I_r&0\cr
0& 0
\end{matrix}\right)Q^{-1}\mathscr{B},$$
which implies that
\begin{equation}\label{EEQ1}
P^{-1}\mathscr{A}P\left(\begin{matrix}
I_r&0\cr
0& 0
\end{matrix}\right)=\left(\begin{matrix}
I_r&0\cr
0& 0
\end{matrix}\right)Q^{-1}\mathscr{B}Q.
\end{equation}
 Let
$$\mathscr{C}=P^{-1}\mathscr{A}P=(c_{i_1\cdots i_n}), \mathscr{D}=Q^{-1}\mathscr{B}Q=(d_{j_1\cdots j_m})\ \mbox{and} \ S=\left(\begin{matrix}
I_r&0\cr
0& 0
\end{matrix}\right)=(s_{ij}).$$
Then $\mathscr{C}S=S\mathscr{D}.$
Furthermore, $$(\mathscr{C}S)_{ii_2\cdots i_n}= \sum_{j_2,\cdots, j_n=1}^nc_{ij_2\cdots j_n}s_{j_2i_2}\cdots s_{j_ni_n}=\left\{ \begin{array}
{l@{\quad \quad}l}
c_{ii_2\cdots i_n},\mbox{if $i_2,\cdots, i_n\in [r]$},\\
0,~~~~~~~\mbox{if one of $i_2,\cdots, i_n$ larger than $r$}.
\end{array}\right.$$
$$(S\mathscr{D})_{ii_2\cdots i_n}= \sum_{j=1}^ns_{ij}d_{ji_2\cdots i_n}=\left\{ \begin{array}
{l@{\quad \quad}l}
d_{ii_2\cdots i_n},\mbox{if $i\in [r]$},\\
0,~~~~~~~\mbox{if $i>r$}.
\end{array}\right.$$
In addition,  $\mathscr{C}$ and $\mathscr{D},$ can be written in the following  form:
\begin{displaymath}
\mathscr{C}=P^{-1}\mathscr{A}P=\left(\begin{matrix}
\mathscr{C}_1&\mathscr{C}_{12}\cr
\mathscr{C}_{21}& \mathscr{C}_2
\end{matrix}\right)\ \mbox{and}\ \
\mathscr{D}=Q^{-1}\mathscr{B}Q=\left(\begin{matrix}
\mathscr{D}_1&\mathscr{D}_{12}\cr
\mathscr{D}_{21}& \mathscr{D}_2
\end{matrix}\right).
\end{displaymath}
Then
\begin{displaymath}
\left(\begin{matrix}
\mathscr{C}_1&0\cr
\mathscr{C}_{21}& 0
\end{matrix}\right)=\mathscr{C}S=
S\mathscr{D}=\left(\begin{matrix}
\mathscr{D}_1&\mathscr{D}_{12}\cr
0& 0
\end{matrix}\right),
\end{displaymath}
which implies that $\mathscr{C}_{21}=0$ and $\mathscr{D}_{12}=0$. Hence $P^{-1}\mathscr{A}P, Q^{-1}\mathscr{B}Q$ have the desired form.

Conversely, let $X=PSQ^{-1}$, where $S=\left(\begin{matrix}
I_r&0\cr
0& 0
\end{matrix}\right)$.  It is easy to see that
$$P^{-1}\mathscr{A}X Q=P^{-1}\mathscr{A}PS=SQ^{-1}\mathscr{B}Q=P^{-1}X\mathscr{B}Q,$$
which implies that $\mathscr{A}X=X\mathscr{B}$. This completes the proof.
\end{proof}
{\bf Remark }: If $k=2$ and $\mathscr{A}X=X\mathscr{B}$ in Lemma~\ref{MainT-1}, then $\mathscr{A}$ and $\mathscr{B}$ have at least $r$ common eigenvalues(counting multiplicities). If $k>2$ and $\mathscr{A}X=X\mathscr{B}$,  it is not known whether  $\mathscr{A}$ and $\mathscr{B}$ have common $H$-eigenvalue. However, if $y$ is an eigenvector of $\mathscr{B}$ corresponding to $\lambda$, by the proof of Lemma~\ref{MainT-1}, it is easy to see that
$$\mathscr{A}(Xy)=(\mathscr{A}X)y=(\mathscr{A}PS)Q^{-1}y=(PSQ^{-1}\mathscr{B}Q)Q^{-1}y=PSQ^{-1}\mathscr{B}y=\lambda Xy^{[k-1]}.$$
If $X$ is chosen such that $Xy^{[k-1]}=(Xy)^{[k-1]}$, then $\lambda$ is also an eigenvalue of tensor $\mathscr{A}$.

Now we are ready to introduce the key idea of equitable partition of tensors in this paper.
Let $\mathscr{A}$ be an order $k$ dimension  $n$ tensor. Let $\{V_1,V_2,\cdots V_m\}$ be a partition of $[n]$, that is, for each $i\in [n]$, there is only one $V_j$ such that $i\in V_j$. Let $\mathscr{A}_{i_1\cdots i_k}$ be a block tensor corresponding to $V_{i_1},\cdots, V_{i_k}$. If  $\mathscr{B}=(b_{i_1i_2\cdots i_k})$ is an order $k$ dimension $m$ tensor with
$$b_{ii_2\cdots i_k}=\frac{1}{|V_i|}\sum_{j\in V_i}~~\sum _{j_2\in V_{i_2},j_3\in V_{i_3},\cdots,j_k\in V_{i_k}}a_{jj_2j_3\cdots j_k},$$
then $\mathscr{B}$ is called {\it quotient tensor} of $\mathscr{A}$ corresponding to $\{V_1,V_2,\cdots V_m\}$.
In addition, a partition  $\{V_1,V_2,\cdots, V_m\}$ of $[n]$ is called an {\it equitable partition} corresponding to tensor $\mathscr{A}$, if
$$\sum _{j_2\in V_{i_2},j_3\in V_{i_3},\cdots,j_k\in V_{i_k}}a_{jj_2j_3\cdots j_k}=b_{ii_2\cdots i_k},$$
for each $V_i$ and $j\in V_i$, $i=1,2,\cdots, m.$  Furthermore,  a quotient tensor $\mathscr{B}$ of $\mathscr{A}$ corresponding to an equitable partition $\{V_1,V_2,\cdots V_m\}$ of $[n]$ is called an {\it equitable quotient tensor} of  $\mathscr{A}$.
Moreover,  the  ${n\times m}$ matrix $X=(x_{ij})$  with $x_{ij}=1$ for $i\in V_j$, and  $0$ otherwise, is called {\it characteristic matrix} corresponding to the partition $\{V_1,V_2,\cdots, V_m\}$ of $[n]$.
Then we have the following result.

\begin{lemma}
Let $\mathscr{A}$ be an order $k$ dimension  $n$ tensor. If $X$ is an characteristic matrix for a partition $\{V_1,V_2,\cdots, V_m\}$ of $[n]$ and
$\mathscr{B}$ is the quotient tensor of $\mathscr{A}$ corresponding to this partition,  then this partition is an equitable partition  if and only if $\mathscr{A}X=X\mathscr{B}$.
\end{lemma}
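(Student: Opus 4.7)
\medskip

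\noindent\textbf{Proof proposal.} The plan is to compute both sides of the tensor equation $\mathscr{A}X=X\mathscr{B}$ entry-wise using the product formula (\ref{Eq2}) and observe that the resulting identity is exactly the defining condition of an equitable partition. Since $X$ is $n\times m$, both $\mathscr{A}X$ and $X\mathscr{B}$ are order-$k$ objects whose first index ranges over $[n]$ and whose remaining $k-1$ indices range over $[m]$, so it is meaningful to compare them entry by entry.

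First I would compute $(\mathscr{A}X)_{i\alpha_2\cdots\alpha_k}$ for $i\in[n]$ and $\alpha_2,\ldots,\alpha_k\in[m]$. By (\ref{Eq2}),
\[
(\mathscr{A}X)_{i\alpha_2\cdots\alpha_k}=\sum_{i_2,\ldots,i_k\in[n]}a_{ii_2\cdots i_k}\,x_{i_2\alpha_2}\cdots x_{i_k\alpha_k}.
\]
Because $x_{j\alpha}=1$ when $j\in V_\alpha$ and $0$ otherwise, and each $i_t$ lies in exactly one class, this sum collapses to
\[
(\mathscr{A}X)_{i\alpha_2\cdots\alpha_k}=\sum_{i_2\in V_{\alpha_2},\ldots,i_k\in V_{\alpha_k}}a_{ii_2\cdots i_k}.
\]

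Next I would compute $(X\mathscr{B})_{i\alpha_2\cdots\alpha_k}$. Viewing $X$ as the order-$2$ tensor in the product formula, only the single ``inner'' index is contracted, giving
\[
(X\mathscr{B})_{i\alpha_2\cdots\alpha_k}=\sum_{j\in[m]}x_{ij}\,b_{j\alpha_2\cdots\alpha_k}=b_{\ell\alpha_2\cdots\alpha_k},
\]
where $\ell$ is the unique index with $i\in V_\ell$ (existence and uniqueness come from $\{V_1,\ldots,V_m\}$ being a partition).

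Combining the two computations, $\mathscr{A}X=X\mathscr{B}$ is equivalent to requiring that for every $\ell\in[m]$, every $i\in V_\ell$, and every $(\alpha_2,\ldots,\alpha_k)\in[m]^{k-1}$,
\[
\sum_{i_2\in V_{\alpha_2},\ldots,i_k\in V_{\alpha_k}}a_{ii_2\cdots i_k}=b_{\ell\alpha_2\cdots\alpha_k},
\]
which is precisely the defining equation of an equitable partition. Both implications then follow at once. There is essentially no obstacle: the only point requiring care is correctly applying the asymmetric product convention of (\ref{Eq2}) — on the left, $X$ contracts against all $k-1$ trailing indices of $\mathscr{A}$, while on the right, $X$ contracts only against the leading index of $\mathscr{B}$, which is exactly what makes the two expressions line up as stated.
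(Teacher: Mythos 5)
Your proposal is correct and follows essentially the same route as the paper: both compute $(\mathscr{A}X)_{i\alpha_2\cdots\alpha_k}$ and $(X\mathscr{B})_{i\alpha_2\cdots\alpha_k}$ entrywise via the product formula, observe that the left side collapses to $\sum_{i_2\in V_{\alpha_2},\ldots,i_k\in V_{\alpha_k}}a_{ii_2\cdots i_k}$ and the right side to $b_{\ell\alpha_2\cdots\alpha_k}$ with $i\in V_\ell$, and identify the resulting identity with the definition of an equitable partition.
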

\begin{proof}
 If $\{V_1,V_2,\cdots, V_m\}$ is an equitable partition of $[n]$ and $X$ is characteristic matrix corresponding to this partition,  then, for $i\in V_{i_1}$,
\begin{eqnarray}
(X\mathscr{B})_{ii_2\cdots i_k}&=&\sum_{j=1}^mx_{ij}b_{ji_2\cdots i_k}=b_{i_1i_2\cdots i_k},\\
(\mathscr{A}X)_{ii_2\cdots i_k}&=&\sum_{j_2,\cdots, j_k=1}^na_{ij_2\cdots j_k}x_{j_2i_2}\cdots x_{j_ki_k}=\sum_{j_2\in V_{i_2},\cdots, j_k\in V_{i_k}}a_{ij_2\cdots j_k}x_{j_2i_2}\cdots x_{j_ki_k} \nonumber\\
&=&\sum_{j_2\in V_{i_2},\cdots, j_k\in V_{i_k}}a_{ij_2\cdots j_k}= b_{i_1i_2\cdots i_k}.
\end{eqnarray}
 Hence $\mathscr{A}X=X\mathscr{B}$. Conversely, by comparing entries of $\mathscr{A}X=X\mathscr{B}$, it is easy to see that  this partition is an equitable partition.
 \end{proof}
 We are ready to present the equitable partition theorem in this paper.
\begin{theorem}\label{MainT2}
Let $\mathscr{A}$ be an order $k$ dimension $n$  real tensor. Let $X$ and $\mathscr{B}$ be  characteristic matrix  and quotient equitable tensor of $\mathscr{A}$ corresponding to an equitable partition  $\{V_1,V_2,\cdots V_m\}$  of $[n]$, respectively.   If $y$ is an eigenvector of $\mathscr{B}$ corresponding to $\lambda$, then $Xy$ is an eigenvector of $\mathscr{A}$ corresponding to $\lambda$.
\end{theorem}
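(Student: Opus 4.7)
The plan is to leverage the previous lemma, which already establishes $\mathscr{A}X = X\mathscr{B}$ for an equitable partition, together with the specific structure of the characteristic matrix $X$. Starting from $\mathscr{B}y = \lambda y^{[k-1]}$, I would like to left-multiply by $X$ and rearrange to produce $\mathscr{A}(Xy) = \lambda (Xy)^{[k-1]}$. Two associativity-type identities are needed along the way, plus one identity that is specific to characteristic matrices.

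First I would verify the two ``associativity'' identities by unpacking the tensor product formula. For any order-$k$ dimension-$n$ tensor $\mathscr{A}$, $n\times m$ matrix $X$, and $m$-vector $y$, a direct computation gives
\begin{equation*}
\bigl((\mathscr{A}X)y\bigr)_i=\sum_{i_2,\dots,i_k}a_{ii_2\cdots i_k}(Xy)_{i_2}\cdots(Xy)_{i_k}=\bigl(\mathscr{A}(Xy)\bigr)_i,
\end{equation*}
and similarly $(X\mathscr{B})y=X(\mathscr{B}y)$. These are straightforward index chases using the definition in (\ref{Eq2}); no subtlety, just careful bookkeeping with the multi-indices $\alpha_j$.

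Next comes the key step specific to the characteristic matrix. Because each row of $X$ has exactly one nonzero entry (a $1$ in column $j$ when $i\in V_j$), for any vector $z\in\mathbb{R}^m$ we have $(Xz)_i=z_{j(i)}$, where $j(i)$ is the unique index with $i\in V_{j(i)}$. Consequently
\begin{equation*}
\bigl(Xz^{[k-1]}\bigr)_i = z_{j(i)}^{k-1} = \bigl((Xz)_i\bigr)^{k-1} = \bigl((Xz)^{[k-1]}\bigr)_i,
\end{equation*}
so $Xz^{[k-1]}=(Xz)^{[k-1]}$. This is the one place where the $0/1$ structure of $X$ is essential; for a general matrix the identity fails, as already flagged in the Remark following Lemma~\ref{MainT-1}.

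Putting the pieces together, from $\mathscr{B}y=\lambda y^{[k-1]}$ and the previous lemma $\mathscr{A}X=X\mathscr{B}$,
\begin{equation*}
\mathscr{A}(Xy)=(\mathscr{A}X)y=(X\mathscr{B})y=X(\mathscr{B}y)=\lambda Xy^{[k-1]}=\lambda(Xy)^{[k-1]}.
\end{equation*}
Finally, I would note that $Xy\neq 0$ whenever $y\neq 0$, since $X$ has full column rank $m$ (its columns are the characteristic vectors of disjoint nonempty blocks), so $Xy$ is a genuine eigenvector of $\mathscr{A}$ for the eigenvalue $\lambda$. I do not expect a real obstacle: the only step requiring care is the characteristic-matrix identity $Xy^{[k-1]}=(Xy)^{[k-1]}$, which is where the combinatorial hypothesis on $X$ (as opposed to the general setting of Lemma~\ref{MainT-1}) pays off.
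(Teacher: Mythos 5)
Your proposal is correct and follows essentially the same route as the paper: the chain $\mathscr{A}(Xy)=(\mathscr{A}X)y=(X\mathscr{B})y=X(\mathscr{B}y)=\lambda Xy^{[k-1]}$, the characteristic-matrix identity $Xy^{[k-1]}=(Xy)^{[k-1]}$ from the $0/1$ row structure of $X$, and the observation that $Xy\neq 0$. The only difference is that you spell out the associativity steps and the nonvanishing of $Xy$ a bit more explicitly than the paper does.
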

\begin{proof}
By $ \mathscr{A}X=X\mathscr{B}$ and $\mathscr{B}y=\lambda y^{[k-1]}$, we have
$$\mathscr{A}(Xy)=(\mathscr{A}X)y=(X\mathscr{B})y=X(\mathscr{B}y)=X(\lambda y^{[k-1]})=\lambda Xy^{[k-1]}.$$
If $i\in V_{j}$ for $j=1, \ldots, m$, then
\begin{eqnarray*}
(Xy)_i&=&\sum_{t=1}^mx_{it}y_t=y_j,\\
(Xy^{[k-1]})_i&=&\sum_{t=1}^mx_{it}y_t^{k-1}=y_j^{k-1}.
\end{eqnarray*}
Hence $Xy^{[k-1]}=(Xy)^{[k-1]}$ and $Xy$ is a nonzero vector. So $\mathscr{A}(Xy)=\lambda Xy^{[k-1]}=\lambda(Xy)^{[k-1]}$. The proof is completed.
\end{proof}
\begin{corollary}\label{Corollary1}
Let $\mathscr{A}$ be an order $k$ dimension $n$ nonnegative real tensor. Let $X$ and $\mathscr{B}$ be  characteristic matrix  and  weakly irreducible quotient equitable tensor of $\mathscr{A}$ corresponding to an equitable partition  $\{V_1,V_2,\cdots V_m\}$  of $[n]$, respectively.  Then $\lambda(\mathscr{A})=\lambda(\mathscr{B})$.
\end{corollary}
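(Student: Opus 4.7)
The plan is to combine Theorem~\ref{MainT2} with the Perron--Frobenius type Theorem~\ref{FRL} to squeeze $\lambda(\mathscr{A})$ from both sides and pin it equal to $\lambda(\mathscr{B})$.

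First I would observe that $\mathscr{B}$ is itself a nonnegative tensor. Indeed, by the definition of the quotient tensor,
\[
b_{i\,i_{2}\cdots i_{k}}=\frac{1}{|V_{i}|}\sum_{j\in V_{i}}\sum_{j_{2}\in V_{i_{2}},\ldots,j_{k}\in V_{i_{k}}}a_{j\,j_{2}\cdots j_{k}},
\]
which is a sum of nonnegative numbers because $\mathscr{A}\ge 0$. Since $\mathscr{B}$ is also assumed weakly irreducible, the second part of Theorem~\ref{FRL} applies: there is a strictly positive vector $y\in\mathbb{R}^{m}$ with $\mathscr{B}y=\lambda(\mathscr{B})\,y^{[k-1]}$.

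Next I would invoke Theorem~\ref{MainT2}. It guarantees that $z:=Xy$ is an eigenvector of $\mathscr{A}$ with the same eigenvalue, i.e.\ $\mathscr{A}z=\lambda(\mathscr{B})\,z^{[k-1]}$. The structural point to notice is that the characteristic matrix $X$ has exactly one $1$ in each row (corresponding to the unique block $V_{j}$ containing that index) and zeros elsewhere; combined with $y>0$, this forces $z=Xy>0$ coordinatewise.

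Finally I would apply part (1) of Theorem~\ref{FRL} with $a=b=\lambda(\mathscr{B})$ and $x=z$: from
\[
\lambda(\mathscr{B})\,z^{[k-1]}\;\le\;\mathscr{A}z\;\le\;\lambda(\mathscr{B})\,z^{[k-1]}
\]
one concludes $\lambda(\mathscr{A})=\lambda(\mathscr{B})$. There is no real obstacle here; the only slightly subtle point is confirming that the eigenvalue produced by the equitable-partition correspondence is in fact the \emph{largest} $H$-eigenvalue of $\mathscr{A}$, and this is exactly what Theorem~\ref{FRL}(1) delivers once positivity of $z$ has been checked. No separate argument for $\lambda(\mathscr{B})\le\lambda(\mathscr{A})$ is needed, since the two-sided squeeze handles both inequalities simultaneously.
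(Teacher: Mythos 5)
Your proof is correct and follows essentially the same route as the paper: take the positive Perron eigenvector of $\mathscr{B}$ via Theorem~\ref{FRL}(2), lift it by Theorem~\ref{MainT2} to a positive eigenvector $Xy$ of $\mathscr{A}$, and squeeze with Theorem~\ref{FRL}(1) to get $\lambda(\mathscr{A})=\lambda(\mathscr{B})$. You merely make explicit two details the paper leaves implicit (nonnegativity of $\mathscr{B}$ and positivity of $Xy$), which is fine.
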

\begin{proof}
By Theorem~\ref{FRL} (2), there exists a positive eigenvector $x$ of $\mathscr{B}$ corresponding to $\lambda(\mathscr{B})$. By theorem~\ref{MainT2}, $Xx$ is a positive eigenvector of $\mathscr{A}$ corresponding to $\lambda(\mathscr{B})$, using Theorem~\ref{FRL} (1), $\lambda(\mathscr{A})=\lambda(\mathscr{B})$.
\end{proof}

\section{The largest $H$-eigenvalue of signless Laplacian tensor of the generalized power hypergraph $G^{k,s}$}
\begin{lemma}\label{MainL1}
Let $G=(V,E)$ be an ordinary  graph, for $k=2r\ge 3$ and $1\le s< \frac{k}{2}$, $G^{k,s}=(V^{k,s},E^{k,s})$ is the generalized power graph of $G$. Let $\mathscr{L}^{k,s}$ and $\mathscr{Q}^{k,s}$ be the Laplacian and signless Laplacian tensors of $G^{k,s}$ respectively. Then $\lambda(\mathscr{L}^{k,s})=\lambda(\mathscr{Q}^{k,s})$.
\end{lemma}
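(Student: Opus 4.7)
The plan is to produce a real $\pm 1$ diagonal matrix $D$ for which $D^{-(k-1)}\mathscr{L}^{k,s}D=\mathscr{Q}^{k,s}$, after which Theorem~\ref{DS} delivers the desired equality. Such a $D$ exists precisely when $\mathscr{G}^{k,s}$ is odd-bipartite, and odd-bipartiteness of $\mathscr{G}^{k,s}$ for $s<k/2$ is asserted in the remark following the definition of generalized power hypergraphs. I would make the bipartition explicit so that it can be plugged directly into the similarity calculation.

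Concretely, I would set
$$W_1=\Big(\bigcup_{v\in V}V_v\Big)\cup\{i_{e,1}:e\in E\},\qquad W_2=V^{k,s}\setminus W_1,$$
and check that each hyperedge $V_u\cup V_v\cup V_e$ meets $W_1$ in exactly $|V_u|+|V_v|+1=2s+1$ vertices, which is odd (the term $i_{e,1}$ is available because $k-2s\ge 2$). Defining $D_{ii}=-1$ on $W_1$ and $D_{ii}=+1$ on $W_2$, the standard $\pm 1$ diagonal-similarity computation proceeds as follows: since $k=2r$ is even and $D_{ii}^2=1$, one has $D^{-(k-1)}=D$; a diagonal entry $d_{ii\cdots i}$ of $\mathscr{D}^{k,s}$ acquires the factor $D_{ii}^k=1$, so the degree part is fixed; a nonzero entry $a_{i_1\cdots i_k}$ of $\mathscr{A}^{k,s}$ is supported on an edge $e=\{i_1,\ldots,i_k\}$ and acquires the factor $\prod_{j=1}^{k}D_{i_j i_j}=(-1)^{|e\cap W_1|}=-1$. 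Consequently $D^{-(k-1)}\mathscr{A}^{k,s}D=-\mathscr{A}^{k,s}$ and
$$D^{-(k-1)}\mathscr{L}^{k,s}D=\mathscr{D}^{k,s}-(-\mathscr{A}^{k,s})=\mathscr{Q}^{k,s}.$$

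Theorem~\ref{DS} then gives that $\mathscr{L}^{k,s}$ and $\mathscr{Q}^{k,s}$ have the same spectrum; because $D$ is real, the change of variables $y\mapsto Dy$ carries real eigenvectors to real eigenvectors, so the sets of $H$-eigenvalues coincide, and in particular $\lambda(\mathscr{L}^{k,s})=\lambda(\mathscr{Q}^{k,s})$. I do not expect a real obstacle: the only substantive step is the odd-bipartition construction, which is an immediate combinatorial check; the rest is the familiar $\pm 1$ diagonal-similarity trick, with the minor point that realness of $D$ is what transfers the conclusion from arbitrary eigenvalues to $H$-eigenvalues.
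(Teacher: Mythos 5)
Your argument is correct, and it is a self-contained version of what the paper does by citation. The paper's proof is a one-liner: it takes the bipartition $V_1=\{i_{e,1}\mid e\in E(G)\}$, observes $|e\cap V_1|=1$ (odd) for every $e\in E^{k,s}$, and then invokes Theorem~2.2 of \cite{Shao2015} (odd-bipartite plus $k$ even implies $\mathscr{L}$ and $\mathscr{Q}$ have the same $H$-spectrum). You instead unpack that citation: you exhibit a different but equally valid odd bipartition ($|e\cap W_1|=2s+1$; the paper's choice with intersection $1$ would work just as well), and you run the $\pm 1$ diagonal-similarity computation $D^{-(k-1)}\mathscr{L}^{k,s}D=\mathscr{Q}^{k,s}$ explicitly, appealing only to the paper's Theorem~\ref{DS}. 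Two small advantages of your route: you note that realness of $D$ is what transfers the conclusion from eigenvalues to $H$-eigenvalues (a point the citation-based proof leaves implicit), and your argument needs no connectivity hypothesis, whereas the quoted form of the Shao--Shan--Wu result is stated for connected hypergraphs. One trivial remark: $i_{e,1}$ exists already when $k-2s\ge 1$; your observation that in fact $k-2s\ge 2$ (since $k$ is even and $s<k/2$ is an integer) is fine but not needed.
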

\begin{proof}
Since $s< \frac{k}{2}$, we can suppose that $V_1=\{i_{e,1}|e\in E(G)\}$ and $V_2=V^{k,s}\backslash V_{1}$. Then $|e\cap V_1|=1$ for $e\in E^{k,s}$, by Theorem~2.2 in \cite{Shao2015}, $\mathscr{L}^{k,s}$ and $\mathscr{Q}^{k,s}$ have the same spectrum, so $\lambda(\mathscr{L}^{k,s})=\lambda(\mathscr{Q}^{k,s})$. 
\end{proof}

Let $G^{k,s}=(V^{k,s},E^{k,s})$ be the generalized power graph of $G$, where $k>2,~1\le s \le \frac{k}{2}$ are integers. Let
$$V_e=\{i_{e,1},\cdots,i_{e,k-2s}\},  ~~V_v=\{i_{v,1},\cdots,i_{v,s}\},$$
where $e\in E,~v\in V$ and $V_e$ may be empty set. Let $\mathscr{B}^{k,s}$ be the quotient tensor of $\mathscr{Q}^{k,s}$ corresponding to the partition $\{V_e,V_v|e\in E,v\in V\}$.  If $k>2,~1\le s < \frac{k}{2}$, then  $\mathscr{B}^{k,s}$ is  an order $k$ dimension $(|E|+|V|)$ tensor. Since  the dimension of $\mathscr{B}^{k,s}$ is only dependent on $|E|$ and $|V|$ and  is not dependent on $k$,  the subscripts of the entries of $\mathscr{B}^{k,s}$ can be indexed by the element of $E\cup V$. If $k>2,~s =\frac{k}{2}$, then  $\mathscr{B}^{k,s}$ is  an order $k$ dimension $|V|$ tensor, thus the subscripts of the entries of $\mathscr{B}^{k,s}$ can be indexed by the element of $V$.
\begin{lemma}\label{MainL2}
Let $G^{k,s}=(V^{k,s},E^{k,s})$ be the generalized $k$-power hypergraph of a simple graph $G$ with integer $k>2$. Then $\{V_e,V_v|e\in E,v\in V\}$ is an equitable partition of $V^{k,s}$ corresponding to the partition $\{V_e,V_v|e\in E,v\in V\}$.
\end{lemma}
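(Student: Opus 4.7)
The plan is to exploit the natural symmetry of the construction of $G^{k,s}$: any permutation of the vertices within a single part $V_v$ or $V_e$ extends to an automorphism of the hypergraph $G^{k,s}$, and therefore preserves both $\mathscr{A}^{k,s}$ and $\mathscr{D}$, hence $\mathscr{Q}^{k,s}$. This automorphism property is exactly what yields the equitable condition.

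First, I would handle the diagonal part of $\mathscr{Q}^{k,s}$. For each $j \in V_v$, the edges of $G^{k,s}$ containing $j$ are precisely the sets $V_u \cup V_v \cup V_e$ with $e = uv \in E(G)$; hence the degree of $j$ in $G^{k,s}$ equals the degree $d_v$ of $v$ in $G$. For each $j \in V_e$, the unique edge of $G^{k,s}$ containing $j$ is $V_u \cup V_v \cup V_e$ where $e = uv$, so the degree of $j$ equals $1$. Thus the diagonal entries of $\mathscr{Q}^{k,s}$ are constant on every part of the partition.

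Second, for the adjacency part, I would observe that for any two vertices $j,j'$ lying in the same part (either $V_v$ or $V_e$), the transposition $(j\,j')$ permutes the edges of $G^{k,s}$ among themselves, because replacing $j$ by $j'$ inside any edge containing $j$ just reindexes within the same part and again gives a legitimate edge $V_u \cup V_v \cup V_e$. Consequently, for fixed classes $V_{i_2}, \dots, V_{i_k}$ of the partition,
\begin{equation*}
\sum_{j_2\in V_{i_2},\ldots,j_k\in V_{i_k}} a^{k,s}_{j\,j_2\cdots j_k}
\ =\ \sum_{j_2\in V_{i_2},\ldots,j_k\in V_{i_k}} a^{k,s}_{j'\,j_2\cdots j_k},
\end{equation*}
where $a^{k,s}$ denotes the entries of $\mathscr{A}^{k,s}$. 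Combined with Step 1, the analogous identity holds with $a^{k,s}$ replaced by the entries of $\mathscr{Q}^{k,s}$. This is precisely the defining property of an equitable partition, so defining $\mathscr{B}^{k,s}$ by the common value of the above sum (which is why the averaging factor $1/|V_i|$ in the definition of the quotient tensor collapses into a single term) yields the equitable quotient tensor.

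The main potential pitfall is not conceptual but notational. One must be a little careful when several of the chosen classes $V_{i_2},\dots,V_{i_k}$ coincide with $V_{i_1}$ (the class of $j$), because the summation in the adjacency contribution then involves a subset of the positions of a single edge; however this does not break symmetry, since the within-part permutation used above still respects the resulting count. Finally I would note that the two subcases $s<k/2$ and $s=k/2$ are handled uniformly: when $s=k/2$ there simply are no parts of the form $V_e$, and the argument goes through verbatim with only the $V_v$ classes present.
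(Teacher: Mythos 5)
Your proof is correct, but it takes a genuinely different route from the paper. The paper verifies the defining constancy condition directly, by a five-case computation of the row block sums $\sum_{j_2\in V_{i_2},\ldots,j_k\in V_{i_k}}(\mathscr{Q}^{k,s})_{jj_2\cdots j_k}$, obtaining explicit common values (the diagonal values $1$ and $d_i$, and the combinatorial factors $\frac{(s-1)!\,s!\,(k-2s)!}{(k-1)!}$ and $\frac{s!\,s!\,(k-2s-1)!}{(k-1)!}$, or $0$). You instead argue by symmetry: every hyperedge of $G^{k,s}$ is a union of whole parts, so any permutation within a part is an automorphism of $G^{k,s}$ (in fact, since $j$ and $j'$ lie in the same part, every hyperedge containing $j$ already contains $j'$, so the transposition fixes each hyperedge setwise --- your phrase ``replacing $j$ by $j'$ inside an edge'' is slightly off but harmless); this automorphism preserves $\mathscr{A}^{k,s}$ and the degrees, hence the block row sums are independent of the representative $j$, which together with the constancy of degrees on parts ($d_v$ on $V_v$, $1$ on $V_e$) is exactly equitability, and the averaging in the definition of the quotient tensor then collapses to the common value. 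Your handling of the degenerate case $s=k/2$ (no $V_e$ parts) is also consistent with the paper. The trade-off: your argument is shorter and conceptually cleaner, but it establishes equitability without producing the explicit entries of $\mathscr{B}^{k,s}$, which the paper's computation supplies and which are used repeatedly afterwards (in Lemma~\ref{weak}, Lemma~\ref{MainC1} and Theorem~\ref{MainC3} via equalities~(\ref{L2E1})--(\ref{L2E3})); so in the context of the whole paper those computations would still have to be done somewhere.
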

\begin{proof}
 We assume that
$$j_1 \in V_{i_1},j_2\in V_{i_2},j_3\in V_{i_3},\cdots,j_k\in V_{i_k},$$
by the definition of the edge of $G^{k,s}$ and the definition of $V_e$ and $V_v$ for $ e\in E$ and $v\in V$.  $ j_1j_2\cdots j_k$ is an edge of $G^{k,s}$ if and only if there is an edge $e\in E$, such that
$$V_{i_1'}=\cdots =V_{i_{k-2s}'}=V_e, ~~V_{i_{k-2s+1}'}=\cdots=V_{i_{k-s}'}=V_u,~~V_{i_{k-s+1}'}=\cdots=V_{i_{k}'}=V_v ~~\mbox{and}~~e=uv,$$
where $i_1',i_2',\cdots i_k'$ are  pairwise different elements of $\{i_1,i_2,\cdots, i_k\}$.  We will divide into the following five cases to prove that $\{V_e,V_v|e\in E,v\in V\}$ is an equitable partition of $V^{k,s}$ corresponding to tensor  $\mathscr{Q}^{k,s}$.

{\bf Case 1}. If $V_{i_1}= V_{i_2}=V_{i_3}=\cdots =V_{i_k}=V_e,~e\in E$, for any $j\in V_{i_1}$, we have
$$\sum _{j_2\in V_{i_2},j_3\in V_{i_3},\cdots,j_k\in V_{i_k}}(\mathscr{Q}^{k,s})_{jj_2j_3\cdots j_k}=(\mathscr{Q}^{k,s})_{j\cdots j}=d^{k,s}_{j\cdots j}=1,$$
where $d^{k,s}_{j\cdots j}$ is the diagonal entries of $\mathscr{Q}^{k,s}$ corresponding to vertex $j$.

{\bf Case 2}. If $V_{i_1}= V_{i_2}=V_{i_3}=\cdots =V_{i_k}=V_i$
$,~i\in V$, for $j\in V_{i_1}$, we have
$$\sum _{j_2\in V_{i_2},j_3\in V_{i_3},\cdots,j_k\in V_{i_k}}(\mathscr{Q}^{k,s})_{jj_2j_3\cdots j_k}=(\mathscr{Q}^{k,s})_{i\cdots i}=d^{k,s}_{i\cdots i}
=d_i,$$
where $d^{k,s}_{i\cdots i}$ (and
 $d_i$ )
  is the diagonal element of $\mathscr{Q}^{k,s}$ (degree diagonal matrix $D(G))$corresponding to vertex $i$, respectively.

{\bf Case 3}. If there are $j_1\in V_{i_1}, j_2\in V_{i_2},\cdots ,j_k\in V_{i_k}$ such that $j_1j_2\cdots j_k$ is an edge of $G^{k,s}$ and $V_{i_1}=V_u,$ where $u\in V$, then there is a vertex $v\in V$ such that
$$V_{i_1}=V_{{i_2'}}=\cdots= V_{{i_s'}}=V_u,~~V_{i_{s+1}'}=V_{i_{s+2}'}=\cdots =V_{i_{2s}'}=V_v,~~ V_{i_{2s+1}'}=\cdots = V_{i_{k}'}=V_e,$$
where $uv\in E$  and $i_1,i_2',\cdots i_k'$ are  pairwise different elements of $\{i_1,i_2,\cdots, i_k\}$. For any $j_1\in V_{i_1}$, we have
\begin{eqnarray}\label{L2E1}
\sum _{j_2\in V_{i_2},\cdots,j_k\in V_{i_k}}(\mathscr{Q}^{k,s})_{j_1j_2\cdots j_k}&=&\sum _{j_2\in V_{i_2},\cdots,j_k\in V_{i_k}}(\mathscr{A}^{k,s})_{j_1j_2\cdots j_k}\nonumber\\
&=&\sum _{
\mbox{$
\begin{matrix}
j_2\in V_{i_2},\cdots,j_k\in V_{i_k}\cr
j_1j_2\cdots  j_k\in E^{k,s}
\end{matrix}
$}
}(\mathscr{A}^{k,s})_{j_1j_2\cdots j_k}\nonumber \\
&=&\sum _{
\mbox{$
\begin{matrix}
j_2\in V_{i_2},\cdots,j_k\in V_{i_k}\cr
j_1j_2\cdots  j_k\in E^{k,s}
\end{matrix}
$}
}\frac{1}{(k-1)!}\nonumber \\
&=&\frac{(s-1)!\cdot s!\cdot (k-2s)!}{(k-1)!}.
\end{eqnarray}

{\bf Case 4}. If there are $j_1\in V_{i_1}, j_2\in V_{i_2},\cdots ,j_k\in V_{i_k}$ such that $j_1j_2\cdots j_k$ is an edge of $G^{k,s}$ and $V_{i_1}=V_e,$ where $e\in E$, then there are vertices $u,v\in V$ such that
$$V_{i_1}=V_{i_2'}=\cdots V_{{i_{k-2s}'}}=V_e,~~V_{i_{k-2s+1}'}=\cdots V_{i_{k-s}'}=V_u,~~ V_{i_{k-s+1}'}=\cdots = V_{i_{k}'}=V_v,$$
where $uv\in E$  and $i_1,i_2',\cdots i_k'$ are  pairwise different elements of $\{i_1,i_2,\cdots, i_k\}$. For any $j_1\in V_{i_1}$, we have
\begin{eqnarray}\label{L2E2}
\sum _{j_2\in V_{i_2},\cdots,j_k\in V_{i_k}}(\mathscr{Q}^{k,s})_{j_1j_2\cdots j_k}&=&\sum _{j_2\in V_{i_2},\cdots,j_k\in V_{i_k}}(\mathscr{A}^{k,s})_{j_1j_2\cdots j_k}\nonumber\\
&=&\sum _{\mbox{$
\begin{matrix}
j_2\in V_{i_2},\cdots,j_k\in V_{i_k}\cr
j_1j_2\cdots  j_k\in E^{k,s}
\end{matrix}
$}}(\mathscr{A}^{k,s})_{j_1j_2\cdots j_k}\nonumber\\
&=& \frac{s!\cdot s!\cdot (k-2s-1)!}{(k-1)!}.
\end{eqnarray}

{\bf Case 5}. If $V_{i_1}, V_{i_2},\cdots , V_{i_k}$ are not all equal and there are not $j_1\in V_{i_1}, j_2\in V_{i_2},\cdots ,j_k\in V_{i_k}$ such that $j_1j_2\cdots j_k$ is an edge of $G^{k,s}$. Then, for any $j_1\in V_{i_1}$, we have
\begin{equation}\label{L2E3}
\sum _{j_2\in V_{i_2},\cdots,j_k\in V_{i_k}}(\mathscr{Q}^{k,s})_{j_1j_2\cdots j_k}=\sum _{j_2\in V_{i_2},\cdots,j_k\in V_{i_k}}(\mathscr{A}^{k,s})_{j_1j_2\cdots j_k}=0.
\end{equation}
From the above five cases, we have $\{V_e,V_v|e\in E,v\in V\}$ is an equitable partition of $V^{k,s}$ corresponding to tensor  $\mathscr{Q}^{k,s}$.
\end{proof}

\begin{lemma}\label{weak}
Let $G=(V,E)$ be a connected graph with integers $|V|>2$,  $2<k_1<k_2$ and  $1\le s<\frac{k_1}{2}$. If  $\mathscr{B}^{k_1,s},\mathscr{B}^{k_2,s}$ are the quotient tensors of $\mathscr{Q}^{k_1,s},\mathscr{Q}^{k_2,s}$, respectively,  then $\mathscr{B}^{k_1,s},\mathscr{B}^{k_2,s}$ are weakly irreducible.
\end{lemma}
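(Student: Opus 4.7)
The plan is to verify weak irreducibility directly from the definition, by showing that the associated $k$-partite graph $G(\mathscr{B}^{k,s})$ is connected, for $k\in\{k_1,k_2\}$. Note that $s<k_1/2<k_2/2$, so in both cases $s<k/2$ and the dimension of $\mathscr{B}^{k,s}$ equals $|V|+|E|$, with indices identified with $V\cup E$. Thus $G(\mathscr{B}^{k,s})$ has $k$ disjoint copies $V^{(1)},\ldots,V^{(k)}$ of $V\cup E$, and a label $\alpha$ in part $p$ is adjacent to a label $\beta$ in part $q\ne p$ exactly when there is a nonzero entry of $\mathscr{B}^{k,s}$ with $\alpha$ at position $p$ and $\beta$ at position $q$.

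First I would prove that, for every edge $e=uv$ of $G$, the ``triangle cluster'' $\{u,v,e\}\times[k]$ spans a connected subgraph of $G(\mathscr{B}^{k,s})$. The key input is Cases 3 and 4 of Lemma~\ref{MainL2}: for any arrangement in $k$ positions of $s$ copies of $u$, $s$ copies of $v$, and $k-2s$ copies of $e$, the corresponding entry of $\mathscr{B}^{k,s}$ is nonzero (equal to $\frac{(s-1)!\,s!\,(k-2s)!}{(k-1)!}$ or $\frac{s!\,s!\,(k-2s-1)!}{(k-1)!}$ according as the first position carries a $u$/$v$-label or an $e$-label). Varying the arrangement yields, for each ordered pair of distinct positions $p\ne q$ and each pair of distinct labels $\alpha\ne\beta$ in $\{u,v,e\}$, the cross-part edge $(\alpha,V^{(p)})(\beta,V^{(q)})$. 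For the same-label adjacencies $(\alpha,V^{(p)})(\alpha,V^{(q)})$, one either uses a multi-copy arrangement when available ($s\ge 2$ for $\alpha\in\{u,v\}$, or $k-2s\ge 2$ for $\alpha=e$), or falls back on the diagonal entries of Cases 1 and 2: $b_{e\cdots e}=1$ and $b_{u\cdots u}=d_u\ge 1$, the last inequality using that $G$ is connected with $|V|>2$. Either way the cluster lies in a single component of $G(\mathscr{B}^{k,s})$.

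Second I would glue the clusters using connectedness of $G$. Two edges $e_1,e_2$ sharing a vertex $u$ have clusters whose columns above $u$ coincide, so their clusters lie in the same component; iterating along a spanning tree of $G$ puts all clusters in one component. Since every $v\in V$ lies in some cluster (because $d_v\ge 1$) and every $e\in E$ is the centre of its own cluster, this single component exhausts $V\cup E$ in every part, giving connectedness of $G(\mathscr{B}^{k,s})$. The argument is identical for $k=k_1$ and $k=k_2$, so both quotient tensors are weakly irreducible.

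The main technical friction is the first step at the boundary values $s=1$ or $k-2s=1$: in those regimes a particular same-label cross-part edge cannot be obtained from a multi-copy arrangement, and the diagonal entries from Cases 1 and 2 of Lemma~\ref{MainL2} are precisely what carry the argument through. This is also the only place where the hypotheses ``$G$ connected'' and $|V|>2$ enter, via $d_u\ge 1$ for every $u\in V$.
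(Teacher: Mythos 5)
Your proposal is correct and takes essentially the same route as the paper: both arguments read off the support of $\mathscr{B}^{k,s}$ from the case analysis of Lemma~\ref{MainL2} (nonzero entries exactly for label multisets $\{u^s,v^s,e^{k-2s}\}$ with $e=uv\in E$, plus the diagonal) and then invoke connectedness of $G$. The only difference is presentational: the paper concludes strong connectivity of the associated graph on the index set $V\cup E$ directly, while you verify connectivity of the $k$-partite graph $G(\mathscr{B}^{k,s})$ explicitly via the triangle-cluster gluing, which is a more detailed rendering of the same argument.
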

\begin{proof}
Consider the $G(\mathscr{A})$, we divide the following five cases. 

{\bf Case 1}. If $e_1,e_2\in E$ are different, by equality~(\ref{L2E3}),
$$(\mathscr{B}^{k_2,s})_{e_1i_2\cdots i_{k_2}}=0,\mbox{ for some $i_t=e_2,2\le t\le k_2$,}$$
so $(e_1,e_2)$ is not an edge of $G(\mathscr{A})$.

{\bf Case 2}. If $v_1,v_2\in V$ are different and $v_1v_2\notin E$, by equality~(\ref{L2E3}),
$$(\mathscr{B}^{k_2,s})_{v_1i_2\cdots i_{k_2}}=0,\mbox{ for some $i_t=v_2,2\le t\le k_2$,}$$
so $(v_1,v_2)$ is not an edge of $G(\mathscr{A})$.

{\bf Case 3}. If $v_1,v_2\in V$ are different and $e=v_1v_2\in E$, by equality~(\ref{L2E2}),
$$(\mathscr{B}^{k_2,s})_{v_1v_2e\cdots e}=(\mathscr{B}^{k_2,s})_{v_2v_1e\cdots e}=\frac{s!\cdot s!\cdot (k-2s-1)!}{(k_2-1)!},$$
so $(v_1,v_2),(v_2,v_1)$ are edges of $G(\mathscr{A})$.

{\bf Case 4}. If $v_1\in V,e\in E$ and  $v_1$ is not incident with $e$, by equality~(\ref{L2E3}),
$$(\mathscr{B}^{k_2,s})_{v_1i_2\cdots i_{k_2}}=0,\mbox{ for some $i_t=e,2\le t\le k_2$ and }$$
$$(\mathscr{B}^{k_2,s})_{ei_2\cdots i_{k_2}}=0,\mbox{ for some $i_t=v_1,2\le t\le k_2$,}$$
so $(v_1,e),(e,v_1)$ are not edges of $G(\mathscr{A})$.

{\bf Case 5}. If $v_1\in V,e\in E$ and  $e=v_1v_2,v_2\in V$, by equalities~(\ref{L2E1}) and (\ref{L2E2}),
$$(\mathscr{B}^{k_2,s})_{v_1v_2e\cdots e}=\frac{(s-1)!\cdot s!\cdot (k_2-2s-1)!}{(k_2-1)!} \mbox{ and }(\mathscr{B}^{k_2,s})_{ev_1v_2e\cdots e}=\frac{s!\cdot s!\cdot (k_2-2s-1)!}{(k_2-1)!},$$
so $(e,v_1), (v_1,e)$ are edges of $G(\mathscr{A})$.

Hence $G(\mathscr{A})$ is a strongly connected graph following from the above five cases and $G$ is connected. Thus $\mathscr{B}^{k_2,s}$ is weakly irreducible. Similarly, $\mathscr{B}^{k_1,s}$ is also weakly irreducible.
\end{proof}

\begin{lemma}\label{MainC1}
Let $G=(V,E)$ be a connected graph with integers $|V|>2$,  $2<k_1<k_2$ and  $1\le s<\frac{k_1}{2}$. If  $\mathscr{B}^{k_1,s},\mathscr{B}^{k_2,s}$ are the quotient tensors of $\mathscr{Q}^{k_1,s},\mathscr{Q}^{k_2,s}$, respectively, then  $\lambda(\mathscr{B}^{k_1,s})>\lambda(\mathscr{B}^{k_2,s})>\Delta(G)$, where $\Delta(G)$ is the maximum degree of $G$.
\end{lemma}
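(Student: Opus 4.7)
The plan is to reduce the tensor eigenequation on $\mathscr{B}^{k,s}$ to an implicit matrix eigenequation involving the adjacency matrix $A$ and degree matrix $D$ of $G$, and then exploit strict monotonicity in $k$. By Lemma~\ref{weak} and Theorem~\ref{FRL}(2), $\mathscr{B}^{k,s}$ admits a positive Perron eigenvector $y$ with eigenvalue $\lambda:=\lambda(\mathscr{B}^{k,s})$. Writing $(\mathscr{B}^{k,s}y)_u=\lambda y_u^{k-1}$ for $u\in V$ and $(\mathscr{B}^{k,s}y)_e=\lambda y_e^{k-1}$ for $e\in E$, using the case-by-case values of the quotient-tensor entries produced by Lemma~\ref{MainL2}, and noting that the number of orderings $\binom{k-1}{s-1,s,k-2s}$ (resp.\ $\binom{k-1}{s,s,k-2s-1}$) in Case~3 (resp.\ Case~4) cancels exactly with the factorials in the corresponding block entry, I obtain the reduced system
\begin{align*}
(\lambda-d_u)\,y_u^{k-s} &= \sum_{v:\,uv\in E} y_v^{s}\,y_{uv}^{k-2s}, \\
(\lambda-1)\,y_e^{2s} &= (y_u y_v)^{s}\quad(e=uv\in E).
\end{align*}

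Next I solve the edge equation, $y_e=(y_uy_v)^{1/2}(\lambda-1)^{-1/(2s)}$, substitute into the vertex equation, set $w_u:=y_u^{k/2}$ and $c:=(\lambda-1)^{(k-2s)/(2s)}$, and reduce to
\begin{equation*}
(A+cD)\,w=c\lambda\,w.
\end{equation*}
Since $w>0$ and $A+cD$ is symmetric, nonnegative and irreducible (as $G$ is connected), Perron--Frobenius for matrices gives $c\lambda=\rho(A+cD)$. For the lower bound, the Rayleigh quotient of $A+tD$ on the indicator of a vertex of maximum degree equals $t\Delta$; strict positivity of the Perron eigenvector forces $\rho(A+tD)>t\Delta$, hence $\lambda>\Delta$. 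Since $|V|>2$ and $G$ is connected, $\Delta\geq 2$, so in fact $\lambda>2$.

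For the strict comparison, define $\mu(t):=\rho(A+tD)/t$ on $(0,\infty)$. A short Hellmann--Feynman / perturbation calculation yields $\mu'(t)=-w(t)^{T}Aw(t)/(t^{2}\|w(t)\|^{2})<0$, so $\mu$ is strictly decreasing. Each $\lambda_i:=\lambda(\mathscr{B}^{k_i,s})$ is a root of the strictly increasing function $F_{k_i}(\lambda):=\lambda-\mu\bigl((\lambda-1)^{(k_i-2s)/(2s)}\bigr)$, which therefore has a unique root. Because $\lambda_1,\lambda_2>2$, we have $(\lambda-1)^{(k-2s)/(2s)}$ strictly increasing in $k$ at every such $\lambda$, and combined with the monotonicity of $\mu$ this gives $F_{k_2}(\lambda)>F_{k_1}(\lambda)$ on $(2,\infty)$. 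Evaluating at $\lambda=\lambda_1$ yields $F_{k_2}(\lambda_1)>0=F_{k_2}(\lambda_2)$, hence $\lambda_1>\lambda_2>\Delta$.

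The main obstacle is the first step: the cancellation between the multinomial ordering counts and the factorial entries of $\mathscr{B}^{k,s}$, and the further fact that the edge-indexed eigenequation is \emph{explicitly solvable} for $y_e$, are what make the reduction to a clean matrix equation on $A+cD$ possible. Once that reduction is in place, everything else follows from standard Perron--Frobenius for matrices and the implicit-function / monotonicity argument encoded by $\mu$.
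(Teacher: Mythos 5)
Your argument is correct, and its first half coincides with the paper's: the clean eigen-equations after the multinomial cancellation, the explicit solution for the edge variables, and in fact your identity $(A+cD)w=c\lambda w$ with $w_u=y_u^{k/2}$ and $c=(\lambda-1)^{(k-2s)/(2s)}$ is precisely a rewriting of the paper's relation $\frac{\lambda-d_{v}}{\lambda-1}=\sum_{u\in N_G(v)}\bigl(x_u/([\lambda-1]^{1/s}x_{v})\bigr)^{k/2}$ obtained there after the same elimination. Where you genuinely diverge is the comparison mechanism. The paper stays inside tensor Perron--Frobenius: it keeps the positive eigenvector $x$ of $\mathscr{B}^{k_2,s}$, observes that each ratio lies in $(0,1]$ so the sums with exponent $k_1/2$ dominate those with exponent $k_2/2$ (strictly at a maximum-degree vertex $w$), feeds $x$ back into $\mathscr{B}^{k_1,s}$ as a test vector, and concludes from Theorem~\ref{FRL}(1). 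You instead eliminate the tensor altogether and characterize $\lambda(\mathscr{B}^{k,s})$ as the root of $\lambda=\mu\bigl((\lambda-1)^{(k-2s)/(2s)}\bigr)$, where $\mu(t)=\rho(A+tD)/t$ is strictly decreasing, then compare roots of the monotone functions $F_{k}$. Both are sound, but each buys something: the paper's route needs only the tensor Perron--Frobenius bound (though, as written, its final inequalities are strict only in the coordinate $w$, so a pedantic reading requires the sharper comparison theorem for weakly irreducible tensors rather than the literal statement of Theorem~\ref{FRL}(1) -- an issue your route avoids entirely), while your scalar characterization of $\lambda(\mathscr{Q}^{k,s})$ is more informative: it immediately reproduces the regular-graph equation of Lemma~\ref{limlemma} and the limiting value $\Delta$, and it localizes all strictness in the single fact $\mu'<0$. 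Two small remarks: the monotonicity of $\mu$ does not really need Hellmann--Feynman, since $\mu(t)=\rho(t^{-1}A+D)$ is strictly decreasing in $t$ by entrywise monotonicity of the spectral radius of irreducible nonnegative matrices; and your lower bound $\rho(A+tD)>t\Delta$ (hence $\lambda>\Delta$) is fine, matching the paper's bound $\lambda>d_v$ for every $v$ read off directly from the vertex equation.
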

\begin{proof} By Lemma~\ref{weak}, $\mathscr{B}^{k_1,s}$ and  $\mathscr{B}^{k_2,s}$ are weakly irreducible.  Hence by Theorem~\ref{FRL}, there is a positive eigenvector $x$ of $\mathscr{B}^{k_2,s}$ corresponding to $\lambda(\mathscr{B}^{k_2,s})$, that is $\mathscr{B}^{k_2,s}x=\lambda(\mathscr{B}^{k_2,s})x^{[k_2-1]}.$ Let $e=v_1v_2\in E$.  Since the $i-$th row sum of $\mathscr{B}^{k,s}$ is equal to the $v-$th row sum of $\mathscr{Q}^{k,s}$ for any $v\in V_i$, we have
\begin{eqnarray}\label{C1E1}
&&\lambda(\mathscr{B}^{k_2,s})x^{k_2-1}_e=\sum_{i_2,\cdots,i_{k_2}}(\mathscr{B}^{k_2,s})_{ei_2\cdots i_{k_2}}x_{i_2}\cdots x_{i_{k_2}}\nonumber \\
&=&x_e^{k_2-1}+{k_2-1 \choose s}\times {k_2-1-s \choose s}\times \frac{s!\cdot s!\cdot (k_2-2s-1)!}{(k_2-1)!}x_e^{k_2-2s-1}x_{v_1}^sx_{v_2}^s\nonumber \\
&=&x_e^{k_2-1}+x_e^{k_2-2s-1}x_{v_1}^sx_{v_2}^s,
\end{eqnarray}
\begin{eqnarray}\label{C1E2}
&&\lambda(\mathscr{B}^{k_2,s})x^{k_2-1}_{v_1}=\sum_{i_2,\cdots,i_{k_2}}(\mathscr{B}^{k_2,s})_{v_1i_2\cdots i_{k_2}}x_{i_2}\cdots x_{i_{k_2}}\nonumber\\
&=&d_{v_1}x_{v_1}^{k_2-1}+\sum_{u\in N_G(v_1)}{k_2-1 \choose s-1}\times {k_2-s \choose s}\times \frac{(s-1)!\cdot s!\cdot (k_2-2s-1)!}{(k_2-1)!}x_{e_u}^{k_2-2s}x_u^sx_{v_1}^{s-1}\nonumber \\
&=&d_{v_1}x_{v_1}^{k_2-1}+\sum_{u\in N_G(v_1)}x_{e_u}^{k_2-2s}x_u^sx_{v_1}^{s-1},
\end{eqnarray}
where $e_u=uv_1$. By Equalities~(\ref{C1E1}), (\ref{C1E2}) and the positive vector $x$, we have $\lambda(\mathscr{B}^{k_2,s})>d_{v_1}\ge 1$ and
\begin{equation}\label{C1E3}
x_e=\left(\frac{x_{v_1}x_{v_2}}{[\lambda(\mathscr{B}^{k_2,s})-1]^\frac{1}{s}}\right)^{\frac{1}{2}}.
\end{equation}
Combining Equalities~(\ref{C1E2}) and (\ref{C1E3}), we have
\begin{eqnarray}\label{C1E4}
\lambda(\mathscr{B}^{k_2,s})x^{k_2-1}_{v_1}&=&\sum_{i_2,\cdots,i_{k_2}}(\mathscr{B}^{k_2,s})_{v_1i_2\cdots i_{k_2}}=d_{v_1}x_{v_1}^{k_2-1}+\sum_{u\in N_G(v_1)}\left(\frac{x_{v_1}x_{u}}{[\lambda(\mathscr{B}^{k_2,s})-1]^\frac{1}{s}}\right)^\frac{k_2-2s}{2}x_u^sx_{v_1}^{s-1}\nonumber \\
&=&d_{v_1}x_{v_1}^{k_2-1}+\sum_{u\in N_G(v_1)}\left(\frac{x_{u}}{[\lambda(\mathscr{B}^{k_2,s})-1]^\frac{1}{s}}\right)^\frac{k_2}{2}(\lambda(\mathscr{B}^{k_2,s})-1)x_{v_1}^\frac{k_2-2}{2}.
\end{eqnarray}
Equality~(\ref{C1E4}) implies that
\begin{eqnarray}\label{C1E5}
\frac{\lambda(\mathscr{B}^{k_2,s})-d_{v_1}}{\lambda(\mathscr{B}^{k_2,s})-1}&=&\sum_{u\in N_G(v_1)}\left(\frac{x_{u}}{[\lambda(\mathscr{B}^{k_2,s})-1]^\frac{1}{s}x_{v_1}}\right)^\frac{k_2}{2}.
\end{eqnarray}
Since $G$ is connected graph and $|V|>2$, then there is a vertex $w\in V$ such that $d_w=\Delta(G)\ge 2$, by equality~(\ref{C1E3}),
$$\lambda(\mathscr{B}^{k_2,s})> d_w=\Delta(G)\ge 2.$$
If $uv_1\in E$, combining equality~(\ref{C1E5}), $\lambda(\mathscr{B}^{k_2,s})-d_{v_1}\le \lambda(\mathscr{B}^{k_2,s})-1$ and $x$ is positive vector, then
\begin{equation}\label{C1E6}0<\frac{x_{u}}{[\lambda(\mathscr{B}^{k_2,s})-1]^\frac{1}{s}x_{v_1}}\le \frac{\lambda(\mathscr{B}^{k_2,s})-d_{v_1}}{\lambda(\mathscr{B}^{k_2,s})-1} \le 1,\end{equation}
the equality of inequality~(\ref{C1E6}) holds only if $d_{v_1}=1$

Let
$$f_{v_1}(k)=\sum_{u\in N_G(v_1)}\left(\frac{x_{u}}{[\lambda(\mathscr{B}^{k_2,s})-1]^\frac{1}{s}x_{v_1}}\right)^\frac{k}{2}.$$
Then $f_{v_1}(k)$ is a decreasing function of $k>2$ independent on $v_1$, since $0<\frac{x_{u}}{[\lambda(\mathscr{B}^{k_2,s})-1]^\frac{1}{s}x_{v_1}}\le 1$ for every $u\in N_G(v_1)$. By $2<k_1<k_2$, we have  $f_{v_1}(k_1)\ge f_{v_1}(k_2)$ and $f_{w}(k_1) > f_{w}(k_2)$. That is
\begin{equation}\label{C1E7}
\sum_{u\in N_G(v_1)}\left(\frac{x_{u}}{[\lambda(\mathscr{B}^{k_2,s})-1]^\frac{1}{s}x_{v_1}}\right)^\frac{k_1}{2}\ge \sum_{u\in N_G(v_1)}\left(\frac{x_{u}}{[\lambda(\mathscr{B}^{k_2,s})-1]^\frac{1}{s}x_{v_1}}\right)^\frac{k_2}{2}
=\frac{\lambda(\mathscr{B}^{k_2,s})-d_{v_1}}{\lambda(\mathscr{B}^{k_2,s})-1},
\end{equation}
\begin{equation}\label{C1E8}
\sum_{u\in N_G(w)}\left(\frac{x_{u}}{[\lambda(\mathscr{B}^{k_2,s})-1]^\frac{1}{s}x_{w}}\right)^\frac{k_1}{2}> \sum_{u\in N_G(w)}\left(\frac{x_{u}}{[\lambda(\mathscr{B}^{k_2,s})-1]^\frac{1}{s}x_{w}}\right)^\frac{k_2}{2}
=\frac{\lambda(\mathscr{B}^{k_2,s})-d_{w}}{(\lambda\mathscr{B}^{k_2,s})-1},
\end{equation}
By $e=v_1v_2$ and equality~(\ref{C1E3}), we have
\begin{eqnarray}\label{C1E9}
(\mathscr{B}^{k_1,s}x)_e&=&\sum_{i_2,\cdots,i_{k_1}}(\mathscr{B}^{k_1,s})_{ei_2\cdots i_{k_1}}x_{i_2}\cdots x_{i_{k_1}}\nonumber \\
&=&x_e^{k_2-1}+{k_2-1 \choose s}\times {k_2-1-s \choose s}\times \frac{s!\cdot s!\cdot (k_1-2s-1)!}{(k_1-1)!}x_e^{k_1-2s-1}x_{v_1}^sx_{v_2}^s\nonumber \\
&=&x_e^{k_1-1}+x_e^{k_1-2s-1}x_{v_1}^sx_{v_2}^s=\lambda(\mathscr{B}^{k_2,s})x^{k_1-1}_e,
\end{eqnarray}
By (\ref{C1E7}) and $x_{v_1}^{k_1-1}>0$, we have
\begin{equation}
\sum_{u\in N_G(v_1)}\left(\frac{x_{u}}{[\lambda(\mathscr{B}^{k_2,s})-1]^\frac{1}{s}x_{v_1}}\right)^\frac{k_1}{2}x_{v_1}^{k_1-1}\ge \frac{\lambda(\mathscr{B}^{k_2,s})-d_{v_1}}{\lambda(\mathscr{B}^{k_2,s})-1}x_{v_1}^{k_1-1}.\nonumber
\end{equation}
Combining with (\ref{C1E7}), we have
\begin{eqnarray}
&&\sum_{u\in N_G(v_1)}\left(\frac{x_{u}}{[\lambda(\mathscr{B}^{k_2,s})-1]^\frac{1}{s}x_{v_1}}\right)^\frac{k_1}{2}x_{v_1}^{k_1-1}=\sum_{u\in N_G(v_1)}\left(\frac{x_{u}^\frac{k_1-2s}{2}x_{v_1}^\frac{k_1-2s}{2}}{[\lambda(\mathscr{B}^{k_2,s})-1]^{\frac{k_1-2s}{2s}}}\right)x_{u}^sx_{v_1}^{s-1}\nonumber \\
&=&\sum_{u\in N_G(v_1)}\left(\frac{x_{u}^\frac{k_1-2s}{2}x_{v_1}^\frac{k_1-2s}{2}}{[\lambda(\mathscr{B}^{k_2,s})-1]^{\frac{k_1-2s}{2s}}}\right)\frac{x_{u}^sx_{v_1}^{s-1}}{\lambda(\mathscr{B}^{k_2,s})-1}\nonumber \\
&=&\sum_{u\in N_G(v_1)}\frac{x_{e_u}^{k_1-2s}x_{u}^sx_{v_1}^{s-1}}{\lambda(\mathscr{B}^{k_2,s})-1}=\frac{(\mathscr{B}^{k_1,s}x)_{v_1}-d_{v_1}x_{v_1}^{k_1-1}}{\lambda(\mathscr{B}^{k_2,s})-1}\nonumber \ge\frac{\lambda(\mathscr{B}^{k_2,s})-d_{v_1}}{\lambda(\mathscr{B}^{k_2,s})-1}x_{v_1}^{k_1-1},\nonumber
\end{eqnarray}
which implies that
\begin{equation}\label{C1E10}
(\mathscr{B}^{k_1,s}x)_{v_1}\ge \lambda(\mathscr{B}^{k_2,s})x_{v_1}^{k_1-1}.
\end{equation}
Similarly, by (\ref{C1E8}), we have
\begin{equation}\label{C1E11}
(\mathscr{B}^{k_1,s}x)_{w}> \lambda(\mathscr{B}^{k_2,s})x_{w}^{k_1-1}.
\end{equation}
By inequalities~(\ref{C1E9}), (\ref{C1E10}), (\ref{C1E11}) and Theorem~\ref{FRL} (1), we have that
$\lambda(\mathscr{B}^{k_1,s})>\lambda(\mathscr{B}^{k_2,s})$.
\end{proof}

\begin{theorem}\label{MainC2}
Let   $\mathscr{B}^{k,s}$ be the quotient tensor of the signless Laplacian tensor $\mathscr{Q}^{k,s}$ of the generalized power graph $G^{k,s}$ associated with a connected graph $G=(V,E)$, where $2<k,2 <|V|$ and $1\le s<\frac{k}{2}$. Then $\lambda(\mathscr{B}^{k,s})=\lambda(\mathscr{Q}^{k,s})$ is strictly decreasing with respect to $k$.
\end{theorem}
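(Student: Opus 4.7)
The theorem really just ties together all the machinery built up in the earlier lemmas, so my plan is to present it as a clean assembly rather than to reprove anything substantive.

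First I would verify the equality $\lambda(\mathscr{B}^{k,s}) = \lambda(\mathscr{Q}^{k,s})$ for every admissible $k$. The signless Laplacian tensor $\mathscr{Q}^{k,s}$ is nonnegative, and by Lemma~\ref{MainL2} the partition $\{V_e, V_v \mid e \in E, v \in V\}$ of $V^{k,s}$ is equitable with respect to $\mathscr{Q}^{k,s}$, so $\mathscr{B}^{k,s}$ is the corresponding equitable quotient tensor. By Lemma~\ref{weak} (using the hypothesis $|V|>2$ and $1\le s<\tfrac{k}{2}$), the quotient tensor $\mathscr{B}^{k,s}$ is weakly irreducible since $G$ is connected. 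All the hypotheses of Corollary~\ref{Corollary1} are therefore met, and we conclude $\lambda(\mathscr{Q}^{k,s}) = \lambda(\mathscr{B}^{k,s})$.

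Next I would address the monotonicity. For any pair $2 < k_1 < k_2$ in the allowed range (with $1\le s<\tfrac{k_1}{2}$), Lemma~\ref{MainC1} gives the strict inequality $\lambda(\mathscr{B}^{k_1,s}) > \lambda(\mathscr{B}^{k_2,s})$. Combining this with the preceding paragraph yields
\[
\lambda(\mathscr{Q}^{k_1,s}) = \lambda(\mathscr{B}^{k_1,s}) > \lambda(\mathscr{B}^{k_2,s}) = \lambda(\mathscr{Q}^{k_2,s}),
\]
which is exactly the assertion that $\lambda(\mathscr{Q}^{k,s})$ is strictly decreasing in $k$.

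Since every piece of the argument is already stated and proved earlier in the paper, there is no real obstacle here; the only point that deserves a brief explicit mention in the write-up is checking that the constraint $1 \le s < \tfrac{k}{2}$ is preserved for the smaller index $k_1$, which is the weakest of the two constraints and is exactly what Lemma~\ref{MainC1} and Lemma~\ref{weak} require. In other words, the theorem is essentially a corollary of Corollary~\ref{Corollary1} and Lemma~\ref{MainC1}, and the proof should be just a few lines of text threading those two results together.
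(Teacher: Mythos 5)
Your proposal is correct and follows exactly the paper's own argument: Lemma~\ref{MainL2} together with Corollary~\ref{Corollary1} (with weak irreducibility from Lemma~\ref{weak}) gives $\lambda(\mathscr{B}^{k,s})=\lambda(\mathscr{Q}^{k,s})$, and Lemma~\ref{MainC1} gives the strict decrease in $k$. Your explicit check that the constraint $1\le s<\frac{k}{2}$ carries over to the smaller index is a minor but welcome addition to what the paper states tersely.
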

\begin{proof}
By Lemma~\ref{MainL2} and Corollary~\ref{Corollary1}, $\lambda(\mathscr{B}^{k,s})=\lambda(\mathscr{Q}^{k,s})$. By Lemma~\ref{MainC1},  $\lambda(\mathscr{B}^{k,s})=\lambda(\mathscr{Q}^{k,s})$ is strictly decreasing with respect to $k$.
\end{proof}
\begin{lemma}\label{lemma}
Let $G=(V,E)$ be a connected graph and $2<k=2r,2<|V|$, $\mathscr{B}^{k,s}$ is the quotient tensor of $\mathscr{Q}^{k,s}$. Then $\lambda(\mathscr{Q}^{k,r})>\lambda(\mathscr{Q}^{k,r-1})$.
\end{lemma}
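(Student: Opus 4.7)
The plan is to compute $\lambda(\mathscr{Q}^{k,r})$ explicitly, reduce $\lambda(\mathscr{Q}^{k,r-1})$ to a spectral problem on an auxiliary ordinary matrix, and then exploit monotonicity of the matrix Perron eigenvalue to conclude a strict inequality between them.

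First, for $s=r$ we have $k-2s=0$, so $V_e=\emptyset$ and the partition reduces to $\{V_v:v\in V\}$. Using the quotient-tensor entries computed in Cases 1--3 of Lemma~\ref{MainL2} (specialised to $k-2s=0$), the eigenvalue equation at a positive Perron eigenvector $y$ of $\mathscr{B}^{k,r}$ reads
\begin{equation*}
\lambda(\mathscr{B}^{k,r})\, y_v^{2r-1} \,=\, d_v\, y_v^{2r-1} \,+\, \sum_{u\in N_G(v)} y_v^{r-1}\, y_u^{r}, \qquad v\in V,
\end{equation*}
since the multinomial factor $\binom{2r-1}{r-1}$ cancels the per-arrangement value $\frac{(r-1)!\, r!}{(2r-1)!}$. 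Dividing by $y_v^{r-1}$ and setting $z_v:=y_v^r$ reduces this to $Q(G)\, z = \lambda(\mathscr{B}^{k,r})\, z$, where $Q(G)=D(G)+A(G)$ is the ordinary signless Laplacian matrix of $G$. Since $z>0$ and $Q(G)$ is irreducible (as $G$ is connected), $\lambda(\mathscr{B}^{k,r})$ must be the Perron eigenvalue $q(G)$ of $Q(G)$. Weak irreducibility of $\mathscr{B}^{k,r}$ follows directly from the connectedness of $G$, so Corollary~\ref{Corollary1} yields $\lambda(\mathscr{Q}^{k,r})=q(G)$.

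Next, for $s=r-1$ we have $k-2s=2$. Let $\mu:=\lambda(\mathscr{Q}^{k,r-1})=\lambda(\mathscr{B}^{k,r-1})$ with positive Perron eigenvector $x$. The edge-class equation (\ref{C1E1}) specialises to $(\mu-1)\, x_e^{2(r-1)}=x_{v_1}^{r-1}\, x_{v_2}^{r-1}$ for each $e=v_1v_2\in E$, hence $x_e^{2}= x_{v_1}x_{v_2}\,(\mu-1)^{-1/(r-1)}$. Substituting this into the vertex-class equation (\ref{C1E2}) and simplifying exactly as in the derivation of (\ref{C1E4}) gives
\begin{equation*}
(\mu - d_v)\, x_v^{r} \,=\, \frac{1}{(\mu-1)^{1/(r-1)}}\sum_{u\in N_G(v)} x_u^{r}, \qquad v\in V.
\end{equation*}
Thus $z_v:=x_v^r$ is a positive eigenvector of the ordinary matrix $M_c := D(G)+c\,A(G)$ with eigenvalue $\mu$, where $c := (\mu-1)^{-1/(r-1)}>0$; equivalently, $\mu=\rho(M_c)$.

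To finish, I invoke Lemma~\ref{MainC1} with $k_1=2r-1$, $k_2=2r$, $s=r-1$ (whose hypotheses are met since $r\ge 2$ and $s<k_1/2$) to conclude $\mu>\Delta(G)\ge 2$, where $\Delta(G)\ge 2$ because $G$ is connected with $|V|>2$. Thus $\mu-1>1$ and $c<1$. Since $A(G)$ is irreducible, $\rho(D+cA)$ is strictly increasing in $c>0$ by the standard Perron--Frobenius monotonicity for irreducible nonnegative matrices. Therefore
\begin{equation*}
\mu \,=\, \rho(M_c) \,<\, \rho(M_1) \,=\, q(G) \,=\, \lambda(\mathscr{Q}^{k,r}),
\end{equation*}
which is exactly the desired strict inequality. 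The main obstacle is the careful bookkeeping in the quotient-tensor entries from Cases 3--4 of Lemma~\ref{MainL2}: the multinomial coefficients must be shown to telescope cleanly against $\frac{(s-1)!\, s!\, (k-2s)!}{(k-1)!}$ and $\frac{s!\, s!\, (k-2s-1)!}{(k-1)!}$ respectively, so that after the substitution $x_e^2=x_{v_1}x_{v_2}(\mu-1)^{-1/(r-1)}$ one obtains the clean vertex equation above; once this reduction is in hand, the comparison with $q(G)$ is a one-line Perron--Frobenius argument.
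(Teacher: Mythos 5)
Your proof is correct, and it shares the paper's key computations but finishes by a different mechanism. Like the paper, you take the positive Perron eigenvector $x$ of $\mathscr{B}^{k,r-1}$, eliminate the edge coordinates via $x_e^2=x_{v_1}x_{v_2}(\mu-1)^{-1/(r-1)}$ (the paper's (\ref{T2E3})), and use $\mu>\Delta(G)\ge 2$ (from Lemma~\ref{MainC1}) to see that the resulting coefficient $(\mu-1)^{-1/(r-1)}$ is strictly less than $1$. The paper then stays at the tensor level: it feeds the restriction $y$ of $x$ to $V$ into $\mathscr{B}^{k,r}$, obtains $(\mathscr{B}^{k,r}y)_v>\mu y_v^{k-1}$ for every $v$, and concludes directly from the Perron--Frobenius bound in Theorem~\ref{FRL}~(1). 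You instead reduce both quantities to ordinary matrix spectral radii: via the substitution $z_v=y_v^{r}$ you identify $\lambda(\mathscr{B}^{k,r})=q(G)=\rho(D(G)+A(G))$, and via $z_v=x_v^{r}$ you get $\mu=\rho(D(G)+cA(G))$ with $c=(\mu-1)^{-1/(r-1)}<1$, finishing with strict monotonicity of the Perron root of irreducible nonnegative matrices. Both arguments hinge on the same elimination and the same bound $\mu>\Delta(G)\ge 2$; your version yields the identity $\lambda(\mathscr{Q}^{2r,r})=q(G)$ as a by-product, at the price of invoking matrix Perron--Frobenius facts (a positive eigenvector of an irreducible nonnegative matrix belongs to its spectral radius, and $\rho(D+cA)$ is strictly increasing in $c$) in place of the tensor bound. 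One point worth making explicit in your write-up: Corollary~\ref{Corollary1} applied with $s=r$ needs weak irreducibility of $\mathscr{B}^{k,r}$, which Lemma~\ref{weak} (stated only for $s<k_1/2$) does not literally cover; your remark that it follows from connectedness of $G$ is correct and easily verified from the Case~3 entries in Lemma~\ref{MainL2}, and the paper glosses over the same point.
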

\begin{proof}
By Lemma~\ref{MainL2} and Corollary~\ref{Corollary1}, $\lambda(\mathscr{B}^{k,r})=\lambda(\mathscr{Q}^{k,r}),~\lambda(\mathscr{B}^{k,r-1})=\lambda(\mathscr{Q}^{k,r-1})$. Next prove $\lambda(\mathscr{B}^{k,r})>\lambda(\mathscr{B}^{k,r-1})$. By Corollary~\ref{MainC1} and Theorem~\ref{FRL}, there is a positive eigenvector $x$ of $\mathscr{B}^{k,r-1}$ corresponding to $\lambda(\mathscr{B}^{k,r-1})$, that is $\mathscr{B}^{k,r-1}x=\lambda(\mathscr{B}^{k,r-1})x^{[k-1]}.$
Let $e=v_1v_2\in E$. Then
\begin{eqnarray}\label{T2E1}
&&\lambda(\mathscr{B}^{k,r-1})x^{k-1}_e=\sum_{i_2,\cdots,i_{k}}(\mathscr{B}^{k,r-1})_{ei_2\cdots i_{k}}x_{i_2}\cdots x_{i_{k}}=x_e^{k-1}+x_ex_{v_1}^{r-1}x_{v_2}^{r-1},
\end{eqnarray}
\begin{eqnarray}\label{T2E2}
&&\lambda(\mathscr{B}^{k,r-1})x^{k-1}_{v_1}=\sum_{i_2,\cdots,i_{k}}(\mathscr{B}^{k,r-1})_{v_1i_2\cdots i_{k}}x_{i_2}\cdots x_{i_{k}}=d_{v_1}x_{v_1}^{k-1}+\sum_{u\in N_G(v_1)}x_{e_u}^2x_u^{r-1}x_{v_1}^{r-2},
\end{eqnarray}
where $e_u=uv_1$. By Equalities~(\ref{T2E1}), (\ref{T2E2}) and the positive vector $x$, we have $\lambda(\mathscr{B}^{k,r-1})>d_{v_1}\ge 1$ and
\begin{equation}\label{T2E3}
x_e=\left(\frac{x_{v_1}x_{v_2}}{[\lambda(\mathscr{B}^{k,r-1})-1]^{\frac{1}{r-1}}}\right)^{\frac{1}{2}}.
\end{equation}
Combining Equalities~(\ref{T2E2}) and (\ref{T2E3}), we have
\begin{eqnarray}\label{T2E4}
&&\lambda(\mathscr{B}^{k,r-1})x^{k-1}_{v_1}=d_{v_1}x_{v_1}^{k-1}+\frac{1}{[\lambda(\mathscr{B}^{k,r-1})-1]^{\frac{1}{r-1}}}\sum_{u\in N_G(v_1)}x_u^{r}x_{v_1}^{r-1}.
\end{eqnarray}
Since $G$ is connected graph and $|V|>2$, then there is a vertex $w\in V$ such that $d_w=\Delta(G)\ge 2$, by Lemma~\ref{MainC1},
$$\lambda(\mathscr{B}^{k_2,r-1})> d_w=\Delta(G)\ge 2.$$
Combining equality~(\ref{T2E4}), we have
\begin{eqnarray}\label{T2E5}
&&\lambda(\mathscr{B}^{k,r-1})x^{k-1}_{v_1}<d_{v_1}x_{v_1}^{k-1}+\sum_{u\in N_G(v_1)}x_u^{r}x_{v_1}^{r-1}.
\end{eqnarray}
Since $G$ is a connected graph, then inequality~(\ref{T2E5}) holds for every vertex of $G$. Let $y$ is the vector such that $y_v=x_v, ~v\in V(G)$. Then for each $v\in V(G)$,
$$(\mathscr{B}^{k,r}y)_v=d_vx_v^{k-1}+\sum_{u\in N_G(v)}x_u^{r}x_{v}^{r-1}>\lambda(\mathscr{B}^{k,r-1})x^{k-1}_{v}.$$
By Theorem~\ref{FRL} (1),
$\lambda(\mathscr{B}^{k,r})>\lambda(\mathscr{B}^{k,r-1})$.
\end{proof}

\begin{theorem}\label{MainC3}
Let $G=(V,E)$ be a connected graph and $2<k,2<|V|$, $1\le s\le \frac{k}{2}$ are integers and $\mathscr{B}^{k,s}$ is the quotient tensor of $\mathscr{Q}^{k,s}$. Then $\lambda(\mathscr{B}^{k,s})=\lambda(\mathscr{Q}^{k,s})$ is strictly increasing  with respect to $s$.
\end{theorem}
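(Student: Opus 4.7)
By Lemma~\ref{MainL2} and Corollary~\ref{Corollary1}, $\lambda(\mathscr{B}^{k,s})=\lambda(\mathscr{Q}^{k,s})$ for every admissible $s$, so it suffices to establish $\lambda(\mathscr{B}^{k,s})<\lambda(\mathscr{B}^{k,s+1})$ for each $1\le s\le \lfloor k/2\rfloor-1$. The plan is to split into two regimes: (i) $s+1<k/2$, where both quotient tensors live on the same $(|V|+|E|)$-dimensional state space and can be compared via a common vector; and (ii) $s+1=k/2$ (which forces $k$ even), where the dimension of the state space jumps from $|V|+|E|$ down to $|V|$. Regime (ii) is precisely Lemma~\ref{lemma} applied with $r=s+1$, so only regime (i) requires new work.

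For regime (i), I follow the template of Lemma~\ref{MainC1}. Since $s<k/2$, Lemma~\ref{weak} gives that $\mathscr{B}^{k,s}$ is weakly irreducible, so by Theorem~\ref{FRL}(2) there is a positive eigenvector $x$ with $\mathscr{B}^{k,s}x=\lambda x^{[k-1]}$, where $\lambda:=\lambda(\mathscr{B}^{k,s})$. Evaluating at each edge coordinate $e=v_1v_2$ gives $(\lambda-1)x_e^{k-1}=x_e^{k-2s-1}x_{v_1}^sx_{v_2}^s$, whence
$$
x_e^{2s}=\frac{x_{v_1}^sx_{v_2}^s}{\lambda-1},\qquad x_e^2=\frac{x_{v_1}x_{v_2}}{(\lambda-1)^{1/s}}.
$$
By Lemma~\ref{MainC1} we have $\lambda>\Delta(G)\ge 2$, so $(\lambda-1)^{1/s}>1$ and the factor $\alpha:=1-(\lambda-1)^{-1/s}$ is strictly positive.

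The key step is then to verify $\mathscr{B}^{k,s+1}x>\lambda x^{[k-1]}$ coordinatewise. Using the row sums computed in Cases 3 and 4 of Lemma~\ref{MainL2} (with parameter $s+1$ in place of $s$), a direct expansion combined with the relations above gives, for each $e=v_1v_2\in E$,
$$
(\mathscr{B}^{k,s+1}x)_e-\lambda x_e^{k-1}=\alpha\, x_e^{k-2s-3}\,x_{v_1}^{s+1}x_{v_2}^{s+1},
$$
and for each $v_1\in V$ (with $e_u=uv_1$),
$$
(\mathscr{B}^{k,s+1}x)_{v_1}-\lambda x_{v_1}^{k-1}=\alpha\sum_{u\in N_G(v_1)}x_{v_1}^sx_u^{s+1}x_{e_u}^{k-2s-2}.
$$
The edge-part exponent $k-2s-3\ge 0$ is exactly the hypothesis $s+1<k/2$, and the vertex-part sum is strictly positive because $G$ is connected, so $N_G(v_1)\neq\varnothing$. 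Applying the strict form of Theorem~\ref{FRL}(1) to $\mathscr{B}^{k,s+1}$ (which is weakly irreducible by Lemma~\ref{weak}) yields $\lambda(\mathscr{B}^{k,s+1})>\lambda(\mathscr{B}^{k,s})$.

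The main obstacle is the dimension jump at $s+1=k/2$: there $\mathscr{B}^{k,k/2}$ lives on $\mathbb{R}^{|V|}$ while the candidate eigenvector of $\mathscr{B}^{k,s}$ lives on $\mathbb{R}^{|V|+|E|}$, so the coordinatewise comparison above is not even formally defined, and the edge-part exponent $k-2s-3$ becomes negative. This single step is however exactly what Lemma~\ref{lemma} handles (by restricting $x$ to the vertex coordinates and applying Theorem~\ref{FRL}(1) to the vertex-only tensor $\mathscr{B}^{k,k/2}$); combining regime (i) with Lemma~\ref{lemma} therefore completes the proof.
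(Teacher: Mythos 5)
Your proposal is correct and follows essentially the same route as the paper: it reduces the boundary case $s+1=\frac{k}{2}$ to Lemma~\ref{lemma}, and for $s+1<\frac{k}{2}$ it feeds the positive eigenvector of $\mathscr{B}^{k,s}$ into $\mathscr{B}^{k,s+1}$, uses the relation $x_e^2=x_{v_1}x_{v_2}/(\lambda-1)^{1/s}$ together with $\lambda>\Delta(G)\ge 2$, and concludes via Theorem~\ref{FRL}(1). Your closed-form expression of the coordinatewise differences through the factor $\alpha=1-(\lambda-1)^{-1/s}$ is just a cleaner packaging of the paper's chain of inequalities (\ref{C3E1})--(\ref{C3E3}).
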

\begin{proof}
Suppose $s+1<\frac{k}{2}$, by Lemma~\ref{lemma}, it is sufficient to prove $\lambda(\mathscr{B}^{k,s})<\lambda(\mathscr{B}^{k,s+1})$. By Lemma~\ref{weak} and Theorem~\ref{FRL}, there is a positive eigenvector $x$ of $\mathscr{B}^{k,s}$ corresponding to $\lambda(\mathscr{B}^{k,s})$, that is $\mathscr{B}^{k,s}x=\lambda(\mathscr{B}^{k,s})x^{[k-1]}.$  Let $e=v_1v_2\in E$.  By equalities~(\ref{C1E1}), (\ref{C1E2}) and (\ref{C1E3}), we have
\begin{equation}\label{C3E1}
(\mathscr{B}^{k,s+1}x)_e>\lambda(\mathscr{B}^{k,s})x^{k-1}_e.
\end{equation}
by equalities~(\ref{C1E2}), (\ref{C1E4}), (\ref{C1E5}) and (\ref{C1E6}), we have
\begin{eqnarray}\label{C3E2}
\frac{\lambda(\mathscr{B}^{k,s})-d_{v_1}}{\lambda(\mathscr{B}^{k,s})-1}&=&\sum_{u\in N_G(v_1)}\left(\frac{x_{u}}{[\lambda(\mathscr{B}^{k,s})-1]^{\frac{1}{s}}x_{v_1}}\right)^\frac{k}{2}\\
&<&\sum_{u\in N_G(v_1)}\left(\frac{x_{u}}{[\lambda(\mathscr{B}^{k,s})-1]^{\frac{1}{s+1}}x_{v_1}}\right)^\frac{k}{2}.
\end{eqnarray}
By $x_{v_1}^{k-1}>0$, we have
\begin{equation}
\sum_{u\in N_G(v_1)}\left(\frac{x_{u}}{[\lambda(\mathscr{B}^{k,s})-1]^{\frac{1}{s+1}}x_{v_1}}\right)^\frac{k}{2}x_{v_1}^{k-1}> \frac{\lambda(\mathscr{B}^{k,s})-d_{v_1}}{\lambda(\mathscr{B}^{k,s})-1}x_{v_1}^{k-1}.\nonumber
\end{equation}
Combining with equality~(\ref{C1E7}), we have
\begin{eqnarray}
&&\sum_{u\in N_G(v_1)}\left(\frac{x_{u}}{[\lambda(\mathscr{B}^{k,s})-1]^{\frac{1}{s+1}}x_{v_1}}\right)^\frac{k}{2}x_{v_1}^{k-1}=\sum_{u\in N_G(v_1)}\left(\frac{x_{u}^\frac{k-2s-2}{2}x_{v_1}^\frac{k-2s-2}{2}}{(\lambda(\mathscr{B}^{k,s})-1)^{\frac{k}{2s+2}}}\right)x_{u}^{s+1}x_{v_1}^{s}\nonumber \\
&=&\sum_{u\in N_G(v_1)}\left(\frac{x_{u}^\frac{k-2s-2}{2}x_{v_1}^\frac{k-2s-2}{2}}{(\lambda(\mathscr{B}^{k,s})-1)^{\frac{k-2s-2}{2s+2}}}\right)
\frac{x_{u}^{s+1}x_{v_1}^{s}}{\lambda(\mathscr{B}^{k,s})-1}\nonumber \\
&>&\sum_{u\in N_G(v_1)}\frac{x_{e_u}^{k-2s-2}x_{u}^{s+1}x_{v_1}^{s}}{\lambda(\mathscr{B}^{k,s})-1}
=\frac{(\mathscr{B}^{k,s+1}x)_{v_1}-d_{v_1}x_{v_1}^{k-1}}{\lambda(\mathscr{B}^{k,s})-1}\nonumber > \frac{\lambda(\mathscr{B}^{k,s})-d_{v_1}}{\lambda(\mathscr{B}^{k,s})-1}x_{v_1}^{k-1},\nonumber
\end{eqnarray}
which implies that
\begin{equation}\label{C3E3}
(\mathscr{B}^{k,s+1}x)_{v_1}> \lambda(\mathscr{B}^{k,s})x_{v_1}^{k-1}.
\end{equation}
By inequalities~(\ref{C3E1}), (\ref{C3E3}) and Theorem~\ref{FRL} (1), we have $\lambda(\mathscr{B}^{k,s+1})>\lambda(\mathscr{B}^{k,s})$.
\end{proof}

\begin{corollary}
Let $G=(V,E)$ be a simple  graph with at least one edge,  where $k=2r(>2)$ is even integer, $1\le s<\frac{k}{2}$.\\
(1) If the maximal degree $\Delta(G)$ of $G$ is $1$, then $\{\lambda(\mathscr{L}^{k,s})=\lambda(\mathscr{Q}^{k,s})=\lambda(\mathscr{L}^{k+2})=\lambda(\mathscr{Q}^{k+2})\}$.\\
(2) If the maximal degree $\Delta(G)$ of $G$ is more than $1$, then $\{\lambda(\mathscr{L}^{k,s})=\lambda(\mathscr{Q}^{k,s})\}$ is a strictly decreasing sequence with respect to $k$.
\end{corollary}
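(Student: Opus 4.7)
The plan is to reduce the statement to the connected case handled by Theorem~\ref{MainC2}, together with a one-shot computation for a single hyperedge.

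First I would invoke Lemma~\ref{MainL1}: since $k=2r>2$ is even and $1\le s<k/2$, it immediately yields $\lambda(\mathscr{L}^{k,s})=\lambda(\mathscr{Q}^{k,s})$, and the same lemma applied with exponent $k+2$ and $s=1<(k+2)/2$ gives $\lambda(\mathscr{L}^{k+2})=\lambda(\mathscr{Q}^{k+2})$. From here on I only have to track signless Laplacian spectral radii. I would then decompose $G$ into its connected components $G_1,\dots,G_t$ (isolated vertices contribute only the eigenvalue $0$). Because $G^{k,s}$ is the disjoint union of the $G_i^{k,s}$, a reordering of coordinates turns $\mathscr{Q}^{k,s}$ into a block-diagonal tensor and hence
\[
\lambda(\mathscr{Q}^{k,s})=\max_{1\le i\le t}\lambda(\mathscr{Q}^{k,s}_{G_i}),
\]
reducing the problem to a per-component analysis.

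For part (1), if $\Delta(G)=1$ then every non-trivial component is a single edge $e=uv$, and $G_e^{k,s}$ is a single $k$-uniform hyperedge all of whose vertices have degree $1$. Evaluating $\mathscr{Q}^{k,s}_{G_e}$ on the all-ones vector produces $1+1=2$ in every coordinate, and since the tensor is weakly irreducible Theorem~\ref{FRL} identifies $2$ as its Perron eigenvalue. Thus $\lambda(\mathscr{Q}^{k,s})=2$ for every admissible $(k,s)$, and the identical computation applied to $G^{k+2}=G^{k+2,1}$ gives $\lambda(\mathscr{Q}^{k+2})=2$; all four quantities agree.

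For part (2), the hypothesis $\Delta(G)>1$ forces at least one component $G_0$ with $\Delta(G_0)\ge 2$ and hence $|V(G_0)|\ge 3$. Theorem~\ref{MainC2} applied to every such $G_0$ gives that $\lambda(\mathscr{Q}^{k,s}_{G_0})$ is strictly decreasing in $k$, and the inequality $\lambda(\mathscr{B}^{k,s}_{G_0})>\Delta(G_0)\ge 2$ supplied by Lemma~\ref{MainC1} shows that these contributions strictly dominate the constant value $2$ coming from any single-edge components. A finite maximum of strictly decreasing positive functions is itself strictly decreasing: for $k_1<k_2$, picking $j^\ast$ attaining the maximum at $k_2$ gives $\max_i f_i(k_1)\ge f_{j^\ast}(k_1)>f_{j^\ast}(k_2)=\max_i f_i(k_2)$. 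This yields (2). The main obstacle I anticipate is precisely this disconnectedness bookkeeping — confirming that the single-edge components, pinned at $2$ for every $k$, cannot overtake the large components and spoil the strict monotonicity; aside from this, the result follows directly from Lemma~\ref{MainL1} and Theorem~\ref{MainC2}.
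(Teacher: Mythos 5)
Your proof is correct and follows essentially the same route as the paper: reduce to the connected case, use Lemma~\ref{MainL1} for the equality $\lambda(\mathscr{L}^{k,s})=\lambda(\mathscr{Q}^{k,s})$, compute the value $2$ directly when $\Delta(G)=1$, and invoke Lemma~\ref{MainC1} and Theorem~\ref{MainC2} when $\Delta(G)>1$. The only (minor) difference is that you carry out the disconnectedness bookkeeping by hand via a block-diagonal/maximum-over-components argument, whereas the paper simply cites Theorem~4.2 of \cite{Hu2013a} for the same reduction.
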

\begin{proof}
By theorem~$4.2$ of \cite{Hu2013a}, it is sufficient to consider that $G$ is connected graph. Next we assume $G$ is connected.\\
(1) $\Delta(G)=1$, then $G$ have only two vertices, then $\lambda(\mathscr{L}^{k,s})=\lambda(\mathscr{Q}^{k,s})=\lambda(\mathscr{L}^{k+2})=\lambda(\mathscr{Q}^{k+2})=2$.\\
(2) By Theorems~\ref{MainC1} and \ref{MainC2}, $\{\lambda(\mathscr{L}^{k,s})=\lambda(\mathscr{Q}^{k,s})\}$ is a strictly decreasing sequence.
\end{proof}

\begin{corollary}\cite{Yuan2016}
Let $G=(V,E)$ be a simple  graph with at least one edge and   even integer $k=2r(>2)$. Let $ \mathscr{L}^{k}$ and $\mathscr{Q}^{k}$ are  (signless) Laplacian tensors of the $k$-power hypergraph $G^k$.\\
(1) If the maximal degree $\Delta(G)$ of $G$ is $1$, then $\{\lambda(\mathscr{L}^{k})=\lambda(\mathscr{Q}^{k})=\lambda(\mathscr{L}^{k+2})=\lambda(\mathscr{Q}^{k+2})\}$.\\
(2) If the maximal degree $\Delta(G)$ of $G$ is more than $1$, then $\{\lambda(\mathscr{L}^{k})=\lambda(\mathscr{Q}^{k})\}$ is a strictly decreasing sequence with respect to $k$.
\end{corollary}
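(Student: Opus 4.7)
The plan is to derive this as a direct specialization of the preceding corollary. The bridge is Remark~1 following the definition of the generalized power hypergraph, which identifies the $k$-power hypergraph $G^k$ with $\mathscr{G}^{k,1}$. Consequently $\mathscr{L}^k=\mathscr{L}^{k,1}$ and $\mathscr{Q}^k=\mathscr{Q}^{k,1}$, and it suffices to check that $s=1$ is admissible in the preceding corollary, which requires $1\le s<\frac{k}{2}$. Since by hypothesis $k=2r>2$, we have $k\ge 4$ and hence $1<\frac{k}{2}$, so the admissibility condition is satisfied.

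First I would reduce to the case that $G$ is connected by appealing to the component decomposition argument (Theorem~4.2 of~\cite{Hu2013a}), exactly as in the preceding corollary: the largest $H$-eigenvalue of $\mathscr{Q}^k$ on a disconnected graph equals the maximum of those of its connected components, so monotonicity on each component transfers to the whole graph. Second, I would substitute the identifications $\mathscr{L}^k=\mathscr{L}^{k,1}$, $\mathscr{Q}^k=\mathscr{Q}^{k,1}$ and invoke the preceding corollary with $s=1$.

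For part~(1), if $\Delta(G)=1$ and $G$ is connected with at least one edge, then $G=K_2$, and part~(1) of the preceding corollary applied at $s=1$ yields the equalities $\lambda(\mathscr{L}^k)=\lambda(\mathscr{Q}^k)=\lambda(\mathscr{L}^{k+2})=\lambda(\mathscr{Q}^{k+2})$ immediately. For part~(2), if $\Delta(G)>1$, then part~(2) of the preceding corollary applied at $s=1$ gives that $\{\lambda(\mathscr{L}^{k,1})=\lambda(\mathscr{Q}^{k,1})\}$ is strictly decreasing in $k$, which is precisely the desired conclusion after rewriting $\mathscr{L}^{k,1}=\mathscr{L}^k$ and $\mathscr{Q}^{k,1}=\mathscr{Q}^k$.

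Since the result is a pure specialization, there is essentially no obstacle to confront; the only subtlety is to verify that the constraint $1\le s<\frac{k}{2}$ of the preceding corollary is indeed met at $s=1$ under the hypothesis $k>2$, and to justify the reduction to the connected case. Once these routine checks are noted, the two statements follow verbatim from the preceding corollary with $s=1$.
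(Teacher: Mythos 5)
Your proposal is correct and coincides with what the paper does: the paper states this corollary without a separate proof precisely because it is the specialization $s=1$ of the preceding corollary (via the identification $\mathscr{G}^{k,1}=G^k$ from Remark~1), with the same reduction to connected components and the same treatment of the $\Delta(G)=1$ case. Your explicit check that $s=1$ satisfies $1\le s<\frac{k}{2}$ for $k=2r>2$ is the only detail the paper leaves tacit.
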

\begin{lemma}\label{limlemma}
Let $G=(V,E)$ be a connected $d$-regular graph, where $d>1$, $k>2,~1\le s\le \lfloor\frac{k-1}{2}\rfloor$ are integers. Then $\lambda(\mathscr{Q}^{k,s})$ is the largest root of $(x-d)(x-1)^{\frac{k-2s}{2s}}-d=0.$
\end{lemma}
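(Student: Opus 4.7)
The plan is to exploit the $d$-regularity of $G$ to display an explicit positive eigenvector of $\mathscr{B}^{k,s}$ taking only two distinct values, and then read off the claimed equation from the eigenvalue relations. By Lemma~\ref{MainL2} and Corollary~\ref{Corollary1} one already has $\lambda(\mathscr{Q}^{k,s})=\lambda(\mathscr{B}^{k,s})$, and by Lemma~\ref{weak} the quotient tensor $\mathscr{B}^{k,s}$ is weakly irreducible; Theorem~\ref{FRL}(2) then implies that any positive eigenvector of $\mathscr{B}^{k,s}$ has eigenvalue equal to $\lambda(\mathscr{B}^{k,s})$, so it suffices to exhibit one.

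Motivated by the symmetry coming from the equitable partition $\{V_e,V_v\mid e\in E,\,v\in V\}$ together with the $d$-regularity of $G$, I will try the two-value ansatz $y_e=b>0$ for every $e\in E$ and $y_v=a>0$ for every $v\in V$. Plugging this into the row of $\mathscr{B}^{k,s}$ indexed by an edge $e=v_1v_2$ (using Cases~1 and~4 of Lemma~\ref{MainL2}) yields
\begin{equation*}
(\lambda-1)\,b^{2s}=a^{2s},\qquad\text{equivalently}\qquad\frac{b}{a}=(\lambda-1)^{-1/(2s)}.
\end{equation*}
Plugging it into the row indexed by a vertex $v$ (using Cases~2 and~3 of Lemma~\ref{MainL2} with $d_v=d$) yields
\begin{equation*}
(\lambda-d)\,a^{k-2s}=d\,b^{k-2s},\qquad\text{equivalently}\qquad\lambda-d=d\,(b/a)^{k-2s}.
\end{equation*}
Eliminating the ratio $b/a$ between these two identities produces exactly $(\lambda-d)(\lambda-1)^{(k-2s)/(2s)}=d$. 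Thus for any real $\lambda>1$ satisfying the displayed equation, the two-value vector is an honest positive eigenvector of $\mathscr{B}^{k,s}$ with eigenvalue $\lambda$, and hence $\lambda=\lambda(\mathscr{B}^{k,s})=\lambda(\mathscr{Q}^{k,s})$.

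It remains to identify $\lambda(\mathscr{Q}^{k,s})$ as the largest root of $f(x):=(x-d)(x-1)^{(k-2s)/(2s)}-d$. Lemma~\ref{MainC1} already forces $\lambda(\mathscr{Q}^{k,s})>\Delta(G)=d>1$, so this root lies in $(d,\infty)$; on that interval both factors of $f$ are positive and strictly increasing in $x$, so $f$ is strictly increasing there with a unique zero, which must coincide with $\lambda(\mathscr{Q}^{k,s})$. On $(1,d]$ one has $f(x)\le-d<0$, so there is no competing root above $1$, and if one prefers to clear the fractional exponent by raising to the $2s$-th power the same monotonicity argument on $(d,\infty)$ shows that no real root of $(x-d)^{2s}(x-1)^{k-2s}-d^{2s}$ exceeds $\lambda(\mathscr{Q}^{k,s})$. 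The only genuine computation in this plan is the derivation of the two identities above, which is a direct specialization of the case analysis already carried out in Lemma~\ref{MainL2}; the main obstacle, if any, is just the bookkeeping of multinomial counts of orderings so that they cancel to give the clean relations shown, and no conceptual input beyond the two-value ansatz is required.
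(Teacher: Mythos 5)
Your proposal is correct and is essentially the paper's own argument: the paper also reduces to the quotient tensor via Lemma~\ref{MainL2} and Corollary~\ref{Corollary1}, plugs in exactly this two-value vector (value $1$ on vertex classes and $(\lambda-1)^{-1/(2s)}$ on edge classes), and invokes Theorem~\ref{FRL} to conclude $\lambda=\lambda(\mathscr{B}^{k,s})$. Your extra monotonicity discussion identifying the unique root in $(d,\infty)$ is a small refinement of what the paper leaves implicit, but the route is the same.
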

\begin{proof}
Since $1\le s\le \lfloor\frac{k-1}{2}\rfloor$ implies $s<\frac{k}{2}$, combining Lemma~\ref{MainC1} and Theorem~\ref{MainC2}, we have $\lambda(\mathscr{B}^{k,s})=\lambda(\mathscr{Q}^{k,s})>2$. It is sufficient to prove $\lambda(\mathscr{B}^{k,s})$ is the largest root of $(x-d)(x-1)^{\frac{k-2s}{2}}-d=0.$ Let $y$ be a vector of dimension $|E|+|V|$ and
$$y_i=\left\{
   \begin{array}{c}
  1,~~~~~~~~~~~~\mbox{if $i\in V$}; \\
 \frac{1}{(\lambda -1)^{\frac{1}{2s}}},~~~\mbox{if $i\in E$}, \\
   \end{array}
\right.$$
where $\lambda$ is the largest root of $(x-d)(x-1)^{\frac{k-2s}{2s}}-d=0.$  For $e=vw$,
$$(\mathscr{B}^{k,s}y)_e=y_e^{k-1}+y_e^{k-2s-1}y_u^sy_v^s
=\lambda \left(\frac{1}{\lambda -1}\right)^{\frac{k-1}{2s}}=\lambda y_e^{k-1},$$
and
$$(\mathscr{B}^{k,s}y)_v=d_v+\sum_{u\in N_G(v)}y_{e_u}^{k-2s}y_u^sy_v^{s-1}=d+\frac{d}{(\lambda -1)^{\frac{k-2s}{2s}}}=\lambda=\lambda y_v.$$
By Theorem~\ref{FRL}, $\lambda=\lambda(\mathscr{B}^{k,s})$. The proof is completed.
\end{proof}
\begin{lemma}\cite{konig1936}\label{konig}
Every graph $G$ of maximum degree $\Delta$ is an induced subgraph of some $\Delta$-regular graph.
\end{lemma}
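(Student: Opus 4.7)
The plan is to give the standard doubling construction due to König, phrased as an induction on the maximum degree-deficiency
$$
\mathrm{def}(G) \;=\; \max_{v\in V(G)}\bigl(\Delta - d_G(v)\bigr).
$$
If $\mathrm{def}(G)=0$ then $G$ is already $\Delta$-regular and there is nothing to do. So I want to show that whenever $\mathrm{def}(G)>0$, one can embed $G$ as an induced subgraph of a graph $G'$ with maximum degree $\Delta$ and strictly smaller maximum deficiency; iterating this step $\mathrm{def}(G)$ times produces the required $\Delta$-regular supergraph.

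For the inductive step, the construction is as follows. Take two disjoint copies $G_1$ and $G_2$ of $G$, and for each vertex $v\in V(G)$ with $d_G(v)<\Delta$, add a single edge joining the copy of $v$ in $G_1$ to the copy of $v$ in $G_2$. Call the resulting graph $G'$. The key verifications are: (i) every vertex of $G_1$ or $G_2$ that had degree $<\Delta$ now has degree exactly one larger, so its deficiency in $G'$ is exactly one less than in $G$, while vertices that were already of degree $\Delta$ are untouched; consequently $\Delta(G')=\Delta$ and $\mathrm{def}(G')=\mathrm{def}(G)-1$; (ii) since the only new edges go between the two copies, no new edges are added inside $G_1$, so $G_1\cong G$ sits inside $G'$ as an induced subgraph. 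Applying induction to $G'$, we obtain a $\Delta$-regular graph containing $G'$, and hence $G$, as an induced subgraph.

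The only point that could cause a slip is the bookkeeping in (i): one has to observe that adding the cross-edge at a deficient vertex $v$ raises its degree by exactly one, not more, because at most one new edge is incident to $v$ in the construction, and that vertices that were already saturated are not touched at all (so the hypothesis $\Delta(G)=\Delta$ is preserved). Everything else is mechanical, so I expect no substantive obstacle beyond writing the induction cleanly.
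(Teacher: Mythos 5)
Your doubling construction is correct: each deficient vertex gains exactly one cross-edge per step, so the maximum deficiency drops by one while the maximum degree stays at $\Delta$ and the copy $G_1$ remains induced, and iterating terminates in a $\Delta$-regular supergraph. The paper gives no proof of this lemma, citing it directly from K\"onig (1936), and your argument is precisely the classical construction that the citation refers to, so there is nothing further to reconcile.
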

\begin{theorem}
Let $G=(V,E)$ be a graph of maximum degree $\Delta>1$, where $k>2,~1\le s\le \lfloor\frac{k-1}{2}\rfloor$ are integers. Then $\lim_{k\rightarrow \infty}\lambda(\mathscr{Q}^{k,s})=\Delta.$
\end{theorem}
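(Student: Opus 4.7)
The plan is to sandwich $\lambda(\mathscr{Q}^{k,s})$ between $\Delta$ (a strict lower bound supplied by Lemma~\ref{MainC1}) and the spectral radius attached to a $\Delta$-regular super-graph of $G$, whose value can be written down explicitly via Lemma~\ref{limlemma} and then shown to converge to $\Delta$. First I would reduce to the case that $G$ is connected with $|V|>2$ by restricting to the connected component containing a vertex of degree $\Delta$; because $\Delta\ge 2$ this component contains at least $\Delta+1\ge 3$ vertices, and $\lambda(\mathscr{Q}^{k,s}(G))$ is the maximum of the spectral radii over components. Lemma~\ref{MainC1} then delivers $\lambda(\mathscr{Q}^{k,s})>\Delta$ for every admissible $k$, which will serve as the lower half of the squeeze.

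For the matching upper bound I would invoke Lemma~\ref{konig} to realise $G$ as an induced subgraph of a (connected) $\Delta$-regular graph $H$. Then $G^{k,s}$ is an induced sub-hypergraph of $H^{k,s}$: every hyperedge of $G^{k,s}$ persists in $H^{k,s}$, while each degree can only grow when passing from $G$ to $H$. Hence $\mathscr{Q}^{k,s}(G)$ is a principal subtensor of $\mathscr{Q}^{k,s}(H)$ whose entries on the common indices are pointwise smaller. Taking a positive Perron eigenvector $y$ of $\mathscr{Q}^{k,s}(H)$ and restricting to $V(G^{k,s})$ yields a positive vector $\tilde y$ with
\[
\mathscr{Q}^{k,s}(G)\,\tilde y\;\le\;\lambda(\mathscr{Q}^{k,s}(H))\,\tilde y^{[k-1]}
\]
coordinate-wise, after which Theorem~\ref{FRL}(1) produces the desired inequality $\lambda(\mathscr{Q}^{k,s}(G))\le\lambda(\mathscr{Q}^{k,s}(H))$.

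Set $\lambda_k:=\lambda(\mathscr{Q}^{k,s}(H))$. Because $H$ is $\Delta$-regular with $\Delta>1$, Lemma~\ref{limlemma} identifies $\lambda_k$ as the largest real root of
\[
(x-\Delta)(x-1)^{(k-2s)/(2s)}-\Delta=0.
\]
By Theorem~\ref{MainC2} applied to $H$ the sequence $\{\lambda_k\}$ is strictly decreasing in $k$, and by Lemma~\ref{MainC1} it is bounded below by $\Delta$, so $L:=\lim_{k\to\infty}\lambda_k$ exists with $L\ge\Delta$. If $L>\Delta$, then eventually $\lambda_k-1\ge L-1>\Delta-1\ge 1$, forcing $(\lambda_k-1)^{(k-2s)/(2s)}\to\infty$, and hence the left-hand side of the defining equation to diverge to $+\infty$ instead of vanishing --- a contradiction. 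Therefore $L=\Delta$, and combining with the lower bound gives $\Delta<\lambda(\mathscr{Q}^{k,s}(G))\le\lambda_k\to\Delta$, as required.

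I expect the main technical obstacle to be the monotonicity step $\lambda(\mathscr{Q}^{k,s}(G))\le\lambda(\mathscr{Q}^{k,s}(H))$ under the induced-sub-hypergraph relation. One has to verify the weak irreducibility of $\mathscr{Q}^{k,s}(H)$ (so that a positive Perron eigenvector is genuinely available) and then check simultaneously that both the smaller diagonal degree entries and the restricted off-diagonal adjacency entries of $\mathscr{Q}^{k,s}(G)$ cooperate to give the coordinate-wise inequality needed for Theorem~\ref{FRL}(1). Once that comparison is in place, the remainder of the argument is an elementary asymptotic analysis of the scalar equation together with Theorem~\ref{MainC2} and Lemma~\ref{MainC1}.
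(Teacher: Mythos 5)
Your proposal is correct and follows essentially the same route as the paper: the lower bound $\lambda(\mathscr{Q}^{k,s})>\Delta$ from Lemma~\ref{MainC1}, the embedding of $G$ into a $\Delta$-regular graph via Lemma~\ref{konig}, the explicit root description of Lemma~\ref{limlemma}, and a limit argument on the scalar equation $(x-\Delta)(x-1)^{\frac{k-2s}{2s}}=\Delta$. The only difference is that where the paper cites Proposition 4.5 of \cite{Hu2015} for the sub-hypergraph monotonicity $\lambda(\mathscr{Q}^{k,s}(G))\le\lambda(\mathscr{Q}^{k,s}(H))$, you prove it directly by restricting the positive Perron eigenvector of $\mathscr{Q}^{k,s}(H)$ and applying Theorem~\ref{FRL}(1), which is a valid self-contained substitute.
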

\begin{proof}
By Lemma~\ref{MainC1}, $\lambda(\mathscr{Q}^{k,s})>\Delta $,  it is sufficient to prove $\lim_{k\rightarrow \infty}\lambda(\mathscr{Q}^{k,s})\le\Delta.$ By theorem~$4.2$ of \cite{Hu2013a}, it is sufficient to consider that $G$ is connected graph. By Proposition 4.5 in \cite{Hu2015}, if $H'$ is a sub-hypergraph of $H$, then $\lambda(Q(H'))\le \lambda(Q(H))$. By Lemmas~\ref{konig} and \ref{limlemma},  $\lambda(\mathscr{Q}^{k,s})$ is not more than the largest root of $(x-\Delta)(x-1)^{\frac{k-2s}{2s}}-\Delta=0.$  Since
$$x-\Delta=\lim_{k\rightarrow\infty} \frac{\Delta}{(x-1)^{\frac{k-2s}{2s}}}=0,\mbox{~~for $x>2$},$$
$\lim_{k\rightarrow \infty}\lambda(\mathscr{Q}^{k,s})\le\Delta$. The proof is completed.
\end{proof}

\textbf{Acknowledgments}
The authors would like to thank  the referees for their valuable corrections and suggestions
which lead to a great improvement of this paper.

\end{document}